\newtheorem{thm}{Theorem}[section]
\newtheorem{cor}[thm]{Corollary}
\newtheorem{lem}[thm]{Lemma}
\newtheorem*{thm*}{Theorem}
\newtheorem*{lem*}{Lemma}
\theoremstyle{definition}
\newtheorem{df}[thm]{Definition}
\newtheorem{ex}[thm]{Example}
\theoremstyle{remark}
\g@addto@macro\bfseries{\boldmath}
\DeclareMathOperator{\Fix}{Fix}
\renewcommand{\iff}{\enskip\Leftrightarrow\enskip}
\newcommand{\Q}{\mathbb{Q}}
\newcommand{\Z}{\mathbb{Z}}
\newcommand{\R}{\mathbb{R}}
\renewcommand{\S}{\Sigma}
\title{\bf Non-affine $n$-valued maps on tori}
\author{
K.\ Dekimpe\thanks{Supported by Methusalem grant METH/21/03 -- long term structural funding of the Flemish Government.}, \;\; L.~De Weerdt\\
\small {KU Leuven Campus Kulak Kortrijk},
\small {8500 Kortrijk},
\small {Belgium} \\
\small {e-mail: Karel.Dekimpe@kuleuven.be},\;\;
\small {Lore.DeWeerdt@kuleuven.be}}
\begin{document}

\maketitle

\begin{abstract}
In this paper we construct $n$-valued maps on $k$-dimensional tori, where $n,k\geq 2$, that are not homotopic to affine $n$-valued maps. This is in high contrast with the single valued case, where any such map is homotopic to an affine (even linear) map. We do this by investigating necessary and sufficient algebraic conditions on certain induced morphisms.
\end{abstract}

\section{Preliminaries on $n$-valued maps}

Let $X$ be a connected compact manifold with fundamental group $\pi$ and universal cover $p:\tilde{X}\to X$. In what follows we will always view $\pi$ as the group of covering transformations of this universal covering. An \emph{n-valued map} $f:X\multimap X$ is a continuous (i.e.\ upper and lower semi-continuous) set-valued function, sending each point of $X$ to a set of $n$ distinct points in $X$. Equivalently, by the work of
Brown and Gon\c{c}alves \cite{browngoncalves}, 
the map $f$ can be viewed as a continuous single-valued function from $X$ to the \emph{unordered configuration space} \[
D_n(X)=\{\{x_1,\ldots,x_n\}\subseteq X\mid x_i\neq x_j \text{ if } i\neq j \},
\]
topologised as the quotient of the space $F_n(X)=\{(x_1,\ldots,x_n)\in X^n\mid x_i\neq x_j \text{ if } i\neq j \}$ under the action of the permutation group $\S_n$. The fixed point set of an $n$-valued map $f$ is \[
\Fix(f)=\{x\in X \mid x\in f(x) \}.
\]

In what follows we recall some of the results of 
Brown et al. \cite{charlotte},
where the study of fixed points of $f$ was done by considering  fixed points of lifts of $f$ to the so-called  \emph{orbit configuration space} \[
F_n(\tilde{X},\pi)=\{(\tilde{x}_1,\ldots,\tilde{x}_n)\in \tilde{X}^n \mid p(\tilde{x}_i)\neq p(\tilde{x}_j) \text{ if } i\neq j \},
\]
which is a covering space of $D_n(X)$ with covering map \[
p^n:F_n(\tilde{X},\pi)\to D_n(X):(\tilde{x}_1,\ldots,\tilde{x}_n)\mapsto \{p(\tilde{x}_1),\ldots,p(\tilde{x}_n)\}.
\]
The corresponding covering group is $\pi^n\rtimes \S_n$, which acts on $F_n(\tilde{X},\pi)$ by \[
(\gamma_1,\ldots,\gamma_n;\sigma)(\tilde{x}_1,\ldots,\tilde{x}_n)=(\gamma_1\tilde{x}_{\sigma^{-1}(1)},\ldots,\gamma_n\tilde{x}_{\sigma^{-1}(n)}).
\]
We also recall that the product in $\pi^n\rtimes \S_n$ is given by \[ 
(\alpha_1,\ldots,\alpha_n;\sigma) (\beta_1,\ldots,\beta_n;\tau) = (\alpha_1\beta_{\sigma^{-1}(1)},\ldots,\alpha_n\beta_{\sigma^{-1}(n)};\sigma\tau).
\]
A lift of $f$ can then be defined as a map $\tilde{f}:\tilde{X}\to F_n(\tilde{X},\pi)$ such that \[
\begin{tikzcd}
\tilde{X} \ar[r,"\tilde{f}"] \ar[d,"p"'] & F_n(\tilde{X},\pi) \ar[d,"p^n"] \\
X \ar[r,"f"] & D_n(X)
\end{tikzcd}
\]
commutes. As $F_n(\tilde{X},\pi)$ is a subspace of $\tilde{X}^n$, such a lift $\tilde{f}:\tilde{X}\to F_n(\tilde{X},\pi)$ splits into $n$ maps $\tilde{f}_1,\ldots,\tilde{f}_n:\tilde{X}\to\tilde{X}$, called the \emph{lift-factors} of $\tilde{f}$. 

A fixed chosen lift $\tilde{f}=(\tilde{f}_1,\ldots,\tilde{f}_n)$ induces a morphism $\psi_{\tilde{f}}=(\phi_1,\ldots,\phi_n;\sigma):\pi\to \pi^n\rtimes \S_n$ of the covering groups so that $\tilde{f} \circ \gamma = \psi_{\tilde{f}}(\gamma) \circ \tilde{f}$ for all $\gamma\in \pi$. Written out using the lift-factors, this becomes \begin{equation}\label{eq:psi}
(\tilde{f}_1\gamma,\ldots,\tilde{f}_n\gamma)=(\phi_1(\gamma)\tilde{f}_{\sigma_\gamma^{-1}(1)},\ldots,\phi_n(\gamma)\tilde{f}_{\sigma_\gamma^{-1}(n)}).
\end{equation}
The map $\sigma: \pi \to \Sigma_n$ will be a morphism of groups, while in general the maps $\phi_i: \pi \to \pi$ are not morphisms.

Using the morphism $\sigma$, one can subdivide $\{1,\ldots,n\}$ into \emph{$\sigma$-classes}, the equivalence classes for the relation given by \[
i\sim j \iff \exists \gamma\in\pi:\sigma_{\gamma}(i)=j.
\]
For all $i$, let $S_i=\{\gamma\in\pi\mid \sigma_{\gamma}(i)=i \}$ denote the stabilizer of $i$ for this relation, which is a finite index subgroup of $\pi$. The restrictions of the maps $\phi_i$ to $S_i$ are group morphisms, and one can use these to define an equivalence relation on $\pi$ by \[
\alpha\sim\beta \iff \exists \gamma\in S_i:\alpha=\gamma\beta\phi_i(\gamma)^{-1}.
\]
The set of equivalence classes for this relation is denoted $\mathcal{R}[\phi_i]$, and its number of elements is the \emph{Reidemeister number} $R(\phi_i)$.

The $\sigma$-classes and Reidemeister classes partition the fixed point set of $f$ into a disjoint union of \emph{fixed point classes}: if $i_1,\ldots,i_r$ are representatives of the $\sigma$-classes, then \[
\Fix(f)=\bigsqcup_{\ell=1}^r \bigsqcup_{[\alpha]\in \mathcal{R}[\phi_{i_\ell}]} p(\Fix(\alpha\tilde{f}_{i_\ell})).
\]
To each fixed point class one can associate an \emph{index}, and the fixed point classes with non-zero index are called \emph{essential}. The number of essential fixed point classes is the \emph{Nielsen number} of $f$, which is a lower bound for the minimal number of fixed points among all maps homotopic to $f$, \[
N(f)\leq \text{min}\{\# \Fix(g) \mid g\sim f \}.
\] 

At this point we would like to remark that when $f$ and $g$ are two homotopic $n$-valued maps, then for any lift $\tilde{f}$ of $f$, there exists a lift $\tilde{g}$ of $g$ such that  
$\psi_{\tilde{f}}=\psi_{\tilde{g}}$.

By the following result (see e.g.\ 
\cite{staecker}), we can restrict the study of Nielsen numbers of $n$-valued maps to \emph{irreducible} maps, whose image cannot be written as the union of the images of an $m$-valued map and an $(n-m)$-valued map, with $0<m<n$:

\begin{thm}
Given a map $f:X\to D_n(X)$, if $f_1:X\to D_m(X)$ and $f_2:X\to D_{n-m}(X)$ are maps such that $f=\{f_1,f_2\}$, the Nielsen number of $f$ is given by \[
N(f)=N(f_1)+N(f_2).
\]
\end{thm}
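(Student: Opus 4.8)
The plan is to choose lifts of $f_1$ and $f_2$ that assemble into a lift of $f$, to read off from \eqref{eq:psi} that the induced data attached to $f$ decomposes as the induced data of $f_1$ together with that of $f_2$, and then to use the partition of $\Fix(f)$ recalled above to identify the fixed point classes of $f$ with those of $f_1$ and $f_2$; the formula then follows by counting essential classes. First note that for every $x\in X$ the sets $f_1(x)$ and $f_2(x)$ are disjoint and nonempty, since their union $f(x)$ consists of $n=m+(n-m)$ distinct points and $0<m<n$. Hence $x\in f(x)$ if and only if $x$ lies in exactly one of $f_1(x),f_2(x)$, so $\Fix(f)=\Fix(f_1)\sqcup\Fix(f_2)$; as $X$ is compact and $f_1,f_2$ are continuous, both pieces are compact, and since (as in the single-valued case) each fixed point class is open and closed in the fixed point set, every fixed point class of $f$ lies entirely in $\Fix(f_1)$ or entirely in $\Fix(f_2)$.

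Because $\tilde X$ is simply connected, $f_1$ and $f_2$ admit lifts $\tilde f_1\colon\tilde X\to F_m(\tilde X,\pi)$ and $\tilde f_2\colon\tilde X\to F_{n-m}(\tilde X,\pi)$; write $\tilde f_{1,1},\dots,\tilde f_{1,m}$ and $\tilde f_{2,1},\dots,\tilde f_{2,n-m}$ for their lift-factors. Using the disjointness of $f_1(x)$ and $f_2(x)$ one checks that
\[
\tilde f=(\tilde f_{1,1},\dots,\tilde f_{1,m},\tilde f_{2,1},\dots,\tilde f_{2,n-m})
\]
takes values in $F_n(\tilde X,\pi)$ and satisfies $p^n\circ\tilde f=f\circ p$, hence is a lift of $f$; write $\psi_{\tilde f}=(\phi_1,\dots,\phi_n;\sigma)$. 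Since distinct lift-factors of a lift to an orbit configuration space have everywhere-distinct projections to $X$, the index $\sigma_\gamma^{-1}(i)$ is uniquely pinned down by \eqref{eq:psi} as the only $j$ for which $\tilde f_i\circ\gamma$ and $\tilde f_j$ differ by a covering transformation. Comparing \eqref{eq:psi} for $\tilde f$ with the analogous identities for $\tilde f_1$ and $\tilde f_2$ then shows that $\sigma_\gamma$ preserves the partition $\{1,\dots,m\}\sqcup\{m+1,\dots,n\}$, that its restriction to each block is the permutation morphism of $\psi_{\tilde f_1}$, resp.\ $\psi_{\tilde f_2}$, and that for $i\le m$ (resp.\ $i>m$) the stabilizer $S_i$ and the map $\phi_i$ coincide with those attached to $f_1$ (resp.\ $f_2$). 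Consequently the $\sigma$-classes of $f$ are the $\sigma$-classes of $f_1$ together with those of $f_2$, and for a representative $i$ of such a class the set $\mathcal{R}[\phi_i]$ is the corresponding Reidemeister set of $f_1$ or of $f_2$.

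Plugging this into the partition
\[
\Fix(f)=\bigsqcup_{\ell=1}^r \bigsqcup_{[\alpha]\in \mathcal{R}[\phi_{i_\ell}]} p(\Fix(\alpha\tilde f_{i_\ell}))
\]
and splitting the outer union according to whether $i_\ell\le m$, the terms with $i_\ell\le m$ recombine, by the same partition applied to $f_1$, into $\Fix(f_1)$, and the remaining terms into $\Fix(f_2)$, consistently with $\Fix(f)=\Fix(f_1)\sqcup\Fix(f_2)$. Thus the fixed point classes of $f$ are exactly those of $f_1$ together with those of $f_2$. For a class $p(\Fix(\alpha\tilde f_{i_\ell}))$ with $i_\ell\le m$ this is literally the same subset of $X$ as the corresponding class of $f_1$; being open and closed in $\Fix(f)$, it admits a neighbourhood in $X$ meeting $\Fix(f)$---and hence also $\Fix(f_1)$---only in itself, on which the local single-valued branch of $f$ used to define its index agrees with the corresponding branch of $f_1$. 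Hence the index of this class computed for $f$ equals its index computed for $f_1$, and symmetrically for $f_2$. Therefore a class of $f$ is essential precisely when the corresponding class of $f_1$ or $f_2$ is essential, and counting essential classes gives $N(f)=N(f_1)+N(f_2)$.

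I expect the main obstacle to be the comparison of indices: it requires recalling the precise definition of the fixed point index of a class of an $n$-valued map from \cite{charlotte} and checking that it is genuinely local, so that near a fixed point class contained in $\Fix(f_1)$ the single-valued branch of $f$ determining the index cannot be distinguished from the corresponding branch of $f_1$. The decomposition of the induced morphism $\psi_{\tilde f}$ and of the $\sigma$-classes, by contrast, is routine bookkeeping with \eqref{eq:psi}, and once the indices match the Nielsen number formula is immediate.
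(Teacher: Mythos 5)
Your argument is correct, but note that the paper does not prove this statement at all: it is quoted as a known result from \cite{staecker}, so there is no in-paper proof to compare against. Your proof (assembling lifts, observing that $\sigma$ preserves the two blocks of indices so that the fixed point classes of $f$ are exactly those of $f_1$ and $f_2$ as subsets of $X$, and using locality of the index) is essentially the standard argument given in that reference.
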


In \cite{staecker} it is also shown that a map $f:X\to D_n(X)$ is irreducible if and only if, for any choice of lift $\tilde{f}$, the induced morphism $\psi_{\tilde{f}}$ induces only one $\sigma$-class. Accordingly, we will call a general  morphism $\psi:\pi\to \pi^n\rtimes \S_n$ (even if it is not induced by an $n$-valued map) irreducible if it induces one $\sigma$-class.

In this paper we focus on the case where $X$ is the $k$-torus $T^k$, i.e. the $k$-fold product $S^1\times\ldots\times S^1$, with universal cover $p:\R^k\to T^k:(t_1,\ldots,t_k)\mapsto (e^{2\pi it_1},\ldots,e^{2\pi it_k})$ and fundamental group $\Z^k$. An interesting type of $n$-valued self-map of $T^k$ is the following.

\begin{df}
A map $f:T^k\to D_n(T^k)$ is called \emph{affine} if it has a lift $(\tilde{f}_1,\ldots,\tilde{f}_n):\R^k\to F_n(\R^k,\Z^k)$ each of whose lift-factors $\tilde{f}_i:\R^k\to \R^k$ is an affine map $\bar{t}\mapsto A_i\bar{t}+\bar{a}_i$.
Equivalently, it can be written as \[
f:T^k\to D_n(T^k):p(\bar{t})\mapsto \{p(A_1\bar{t}+\bar{a}_1),\ldots,p(A_n\bar{t}+\bar{a}_n)\}.
\]
\end{df}

The interest in affine maps lies in the fact that their Reidemeister and Nielsen numbers are very easy to compute in terms of the matrices $A_i$:

\begin{thm}\label{thm:nv-RN}
If $f:T^k\to D_n(T^k):p(\bar{t})\mapsto \{p(A_1\bar{t}+\bar{a}_1),\ldots,p(A_n\bar{t}+\bar{a}_n)\}$ is an affine $n$-valued map, then
\[
R(f)=\sum_{i=1}^n|\det(I_k-A_i)|_\infty \quad \text{and} \quad
N(f)=\sum_{i=1}^n|\det(I_k-A_i)|,
\]
where $I_k\in \R^{k\times k}$ denotes the identity matrix.
\end{thm}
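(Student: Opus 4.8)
The plan is to reduce everything to the single-valued affine case, which is classical, by exploiting the structure of the induced morphism $\psi_{\tilde f}$ and the $\sigma$-class decomposition of $\Fix(f)$.

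First I would compute the induced morphism. Fix the lift $\tilde f=(\tilde f_1,\dots,\tilde f_n)$ with $\tilde f_i(\bar t)=A_i\bar t+\bar a_i$. For $\gamma=\bar m\in\Z^k$, equation \eqref{eq:psi} reads $(A_1(\bar t+\bar m)+\bar a_1,\dots)=(\phi_1(\bar m)+A_{\sigma_{\bar m}^{-1}(1)}\bar t+\bar a_{\sigma_{\bar m}^{-1}(1)},\dots)$, comparing the $\bar t$-linear parts forces $A_i=A_{\sigma_{\bar m}^{-1}(i)}$ whenever $\sigma_{\bar m}\ne\mathrm{id}$ acts, and matching constants gives $\phi_i(\bar m)=A_i\bar m\in\Z^k$. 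In particular each $\phi_i$ restricted to its stabilizer $S_i$ (indeed on all of $\Z^k$, once $\sigma$ permutes only indices with equal matrices) is the linear endomorphism $\bar m\mapsto A_i\bar m$, so the $A_i$ are integral matrices and $\sigma$ only permutes blocks of indices sharing a common matrix. Next I would invoke the decomposition $\Fix(f)=\bigsqcup_{\ell}\bigsqcup_{[\alpha]\in\mathcal R[\phi_{i_\ell}]}p(\Fix(\alpha\tilde f_{i_\ell}))$ from the preliminaries. For a fixed $\sigma$-class with representative $i$ and stabilizer $S_i$ of index, say, $d_i=|\sigma\text{-class of }i|$, the Reidemeister set $\mathcal R[\phi_i]$ is the set of $\phi_i$-twisted conjugacy classes of the finite-index subgroup $S_i\le\Z^k$; since $S_i$ is free abelian of rank $k$ and $\phi_i|_{S_i}$ is (the restriction of) the linear map $A_i$, this is a twisted-conjugacy computation on a lattice, and $R(\phi_i)=|\det(I_k-A_i)|_\infty$ by the standard formula (finite when $\det(I_k-A_i)\ne0$, infinite otherwise), exactly as in the single-valued torus case. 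Summing over one representative per $\sigma$-class, and noting that indices in the same class contribute the same matrix, gives $R(f)=\sum_{\ell}R(\phi_{i_\ell})=\sum_{i=1}^n|\det(I_k-A_i)|_\infty$ — here I must be careful that the sum over $\sigma$-class representatives, weighted correctly, reproduces the full sum $\sum_{i=1}^n$; this works because each $\sigma$-class of size $d$ contributes $d$ equal terms $|\det(I_k-A_i)|_\infty$, matching the $d$ summands in $\sum_{i=1}^n$ with that matrix. Actually the cleaner bookkeeping: by irreducibility-type splitting (Theorem 1.1) one may assume a single $\sigma$-class, and then $\Fix(f)$ is covered by the single-valued fixed point data of $\alpha\tilde f_i$ over $[\alpha]\in\mathcal R[\phi_i]$, which is precisely $\Fix$ of the affine self-map $A_i\bar t+(\text{shift})$ on the quotient torus $\R^k/S_i$, a $d$-fold cover of $T^k$.

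For the Nielsen number I would argue that the index of the fixed point class $p(\Fix(\alpha\tilde f_i))$ equals the classical fixed point index of the affine self-map $\bar t\mapsto A_i\bar t+\bar b$ (for the appropriate $\bar b$) on $\R^k$ localized at an isolated fixed point, which is $\mathrm{sign}\det(I_k-A_i)$ when $\det(I_k-A_i)\ne0$, and $0$ classes otherwise. Hence within each $\sigma$-class of representative $i$ all $R(\phi_i)=|\det(I_k-A_i)|$ classes are essential with the same sign when $\det(I_k-A_i)\ne0$, contributing $|\det(I_k-A_i)|$ to $N(f)$, and $0$ when the determinant vanishes (the fixed point set of $\alpha\tilde f_i$ being then either empty or a positive-dimensional subtorus of index zero). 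Summing over all indices $i$ (again each $\sigma$-class of size $d$ contributing $d$ equal terms) yields $N(f)=\sum_{i=1}^n|\det(I_k-A_i)|$, with the convention that $|\det(I_k-A_i)|=0$ is already the correct value there.

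The main obstacle I anticipate is not any single computation — the twisted-conjugacy count on a lattice and the affine fixed point index are both standard — but rather the bookkeeping across $\sigma$-classes: making rigorous that passing to a stabilizer $S_i$ of index $d$ and doing the single-valued computation on the $d$-fold cover $\R^k/S_i$ reproduces exactly the terms $|\det(I_k-A_i)|$ (and $|\det(I_k-A_i)|_\infty$) with the right multiplicity, rather than, say, $d$ times too many or a determinant on the sublattice that differs from $\det(I_k-A_i)$. The resolution is that the $d$ indices in a $\sigma$-class all carry the \emph{same} matrix $A_i$ and the $d$ lift-factors are cyclically shifted by the $\phi_j$'s, so the single "summed" fixed point class over the $\sigma$-class naturally reassembles into $d$ equal contributions, one per index; alternatively, reducing to the irreducible case via Theorem~\ref{thm:nv-RN}'s predecessor (Theorem~1.1) lets one treat each $\sigma$-class separately and then the claim is that an irreducible affine $n$-valued map with common matrix $A$ has $R=n|\det(I_k-A)|_\infty$ and $N=n|\det(I_k-A)|$, which is the single-valued result on an $n$-fold torus cover.
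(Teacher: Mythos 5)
The paper does not actually prove Theorem~\ref{thm:nv-RN}: it is recalled as a known computational fact about affine $n$-valued maps (the machinery behind it is the fixed-point-class decomposition from \cite{charlotte} quoted in Section~1). So there is no in-paper proof to match; judged on its own, your route --- compute $\psi_{\tilde f}$, use the decomposition $\Fix(f)=\bigsqcup_\ell\bigsqcup_{[\alpha]}p(\Fix(\alpha\tilde f_{i_\ell}))$, count Reidemeister classes per $\sigma$-class, and compute indices of the affine fixed points --- is the standard and correct one.

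Two concrete points need repair, though. First, the $A_i$ are \emph{not} integral in general: from Lemma~\ref{lem:aff-wd} one only gets $\phi_i(\bar z)=A_i\bar z$ for $\bar z\in S_i$, so $A_i$ maps the finite-index sublattice $S_i$ into $\Z^k$ and is merely rational (the paper makes exactly this remark in the proof of Theorem~\ref{thm:suff-cond}); likewise your parenthetical claim that $\phi_i=A_i$ on all of $\Z^k$ is false, since for $\bar z\notin S_i$ one has $\phi_i(\bar z)=A_i\bar z+\bar a_i-\bar a_{(\sigma_{\bar z})^{-1}(i)}$ with a nonzero correction term. Neither slip is fatal, because only $\phi_i|_{S_i}$ enters the count. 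Second, and more importantly, the identity you invoke as ``the standard formula'', $R(\phi_i)=|\det(I_k-A_i)|_\infty$, is wrong and contradicts the very next step of your own bookkeeping. With the paper's definition, $\mathcal R[\phi_i]$ is the quotient of all of $\Z^k$ by $\{\gamma-\phi_i(\gamma):\gamma\in S_i\}=(I_k-A_i)S_i$, and since $(I_k-A_i)S_i$ has covolume $[\Z^k:S_i]\cdot|\det(I_k-A_i)|$, the correct count is
\[
R(\phi_i)=\bigl[\Z^k:(I_k-A_i)S_i\bigr]=d_i\,|\det(I_k-A_i)|_\infty,\qquad d_i=[\Z^k:S_i]=\#(\sigma\text{-class of }i).
\]
It is precisely this factor $d_i$, combined with $A_j=A_i$ for $j$ in the class of $i$ (Lemma~\ref{lem:aff-wd}), that turns the sum over $\sigma$-class representatives into the full sum $\sum_{i=1}^n$; there is no separate ``weighting'' step. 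Your closing remark --- that an irreducible affine map with matrix $A$ has $R=n|\det(I_k-A)|_\infty$, the single-valued answer on the $n$-fold cover $\R^k/S$ --- is the correct resolution and should replace the erroneous formula rather than sit alongside it. The same correction applies verbatim to the Nielsen count: each of the $d_i|\det(I_k-A_i)|$ classes is a single point of index $\operatorname{sign}\det(I_k-A_i)$ when the determinant is nonzero, and all classes have index $0$ when it vanishes.
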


Moreover, in the single-valued case $n=1$, we have the following.

\begin{thm}
For a map $f:T^k\to T^k$, let $A\in \Z^{k\times k}$ be the matrix such that the induced fundamental group morphism $f_*:\Z^k\to \Z^k$ is given by multiplication with $A$. Then $f$ is homotopic to the affine map $T^k\to T^k:p(\bar{t})\mapsto p(A\bar{t})$. Therefore the Reidemeister and Nielsen numbers of $f$ are \[
R(f)=|\det(I_k-A)|_\infty \quad \text{and} \quad
N(f)=|\det(I_k-A)|.
\]
\end{thm}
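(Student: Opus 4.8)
The plan is to reduce the statement to the affine case already settled in Theorem~\ref{thm:nv-RN}, the only geometric input being that $T^k$ is aspherical. Indeed, its universal cover $\R^k$ is contractible, so $T^k$ is a $K(\Z^k,1)$. First I would record that the linear map $L_A:T^k\to T^k:p(\bar t)\mapsto p(A\bar t)$ is well defined precisely because $A$ has integer entries, so that multiplication by $A$ carries the lattice $\Z^k$ into itself and the formula descends to the quotient; it is continuous, and its unique lift $\R^k\to\R^k$ fixing the origin is the linear map $\bar t\mapsto A\bar t$, which intertwines the deck transformation $\bar t\mapsto\bar t+\bar e_j$ with $\bar t\mapsto\bar t+A\bar e_j$. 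Hence the endomorphism of $\pi_1(T^k)=\Z^k$ induced by $L_A$ is again multiplication by $A$, i.e.\ it coincides with $f_*$.

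Second, I would invoke the classical fact that maps into an aspherical space are classified up to (free) homotopy by the induced morphism on fundamental groups, taken up to conjugacy; since $\pi_1(T^k)=\Z^k$ is abelian, conjugacy is trivial and the induced morphism is a genuine invariant of the free homotopy class, so $f_*=(L_A)_*$ forces $f\simeq L_A$. Concretely this is a short obstruction-theory argument: one attempts to build a homotopy $H:T^k\times[0,1]\to T^k$ relative to $T^k\times\{0,1\}$ by extending skeleton by skeleton, the obstructions lying in $H^{m+1}\!\left(T^k\times[0,1],\,T^k\times\{0,1\};\,\pi_m(T^k)\right)$; these vanish for $m\ge 2$ because $\pi_m(T^k)=0$, while the hypothesis $f_*=(L_A)_*$ kills the primary obstruction in dimension one. (Alternatively one may simply cite the standard classification of maps into a $K(\pi,1)$.) This homotopy statement is the main obstacle; everything else is bookkeeping, and the one point to watch — that $f_*$ is a priori defined only up to conjugation and that free rather than based homotopies are at issue — is harmless here because the target group is abelian.

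Finally, since the Reidemeister and Nielsen numbers are homotopy invariants, $R(f)=R(L_A)$ and $N(f)=N(L_A)$. But $L_A$ is an affine $1$-valued map, with single matrix $A_1=A$ and translation vector $\bar a_1=\bar 0$, so Theorem~\ref{thm:nv-RN} applied with $n=1$ yields $R(f)=|\det(I_k-A)|_\infty$ and $N(f)=|\det(I_k-A)|$, as claimed.
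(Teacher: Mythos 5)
Your proposal is correct. Note, however, that the paper offers no proof of this statement at all: it is quoted as a classical fact, with the literature (\cite{felshtyn-lee, leelee, kblee}) cited for the more general infra-nilmanifold version. Your argument is the standard one and fills that gap correctly: $T^k$ is a $K(\Z^k,1)$, maps into an aspherical CW-complex are classified up to free homotopy by the induced homomorphism on $\pi_1$ modulo conjugacy (trivial here since $\Z^k$ is abelian), so $f\simeq L_A$; then homotopy invariance of $N$, together with the paper's remark that homotopic maps admit lifts inducing the same morphism $\psi$ (which gives homotopy invariance of $R$ in the paper's framework), reduces everything to Theorem~\ref{thm:nv-RN} with $n=1$. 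The obstruction-theoretic sketch is sound, and you correctly flag the only delicate point (based versus free homotopy and conjugacy-ambiguity of $f_*$) and why it is harmless.
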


In fact, more generally it can be shown that any single-valued map on a so-called \emph{infra-nilmanifold} (of which tori are a special case) is homotopic to an affine map, and one has also established formulas for the Reidemeister and Nielsen numbers for such affine maps that generalise the ones above (\cite{felshtyn-lee, leelee, kblee}).

For $n$-valued maps, the only case that is completely understood so far is the $1$-dimensional case of the circle $S^1$. In \cite{brown2006}, Brown showed that any $n$-valued map on $S^1$ is homotopic to an affine map, so that the Reidemeister and Nielsen numbers of all $n$-valued maps on $S^1$ are determined by Theorem \ref{thm:nv-RN}.

In general, for $n\geq 2$ and $k\geq 2$, it is no longer true that any $n$-valued map of $T^k$ is homotopic to an affine map. The goal of this paper is to find conditions on the induced morphism $\psi_{\tilde{f}}$ for $f$ to be homotopic to an affine map. More generally, we ask:

\begin{center}
\itshape
Given a morphism $\psi:\Z^k\to (\Z^k)^n\rtimes \S_n$, is there an affine map \\ $f:T^k\to D_n(T^k)$ such that $\psi_{\tilde{f}}=\psi$, for some lift $\tilde{f}$ of $f$?
\end{center}

As observed above, in order to study fixed points (Nielsen numbers) of $n$-valued maps, it suffices to study irreducible morphisms $\psi$. In that case, we will give a necessary and sufficient condition for $\psi$ to be induced by an affine map. Up to a smart choice of lift, this condition reduces to a more concrete one, which can be used to construct examples of non-affine morphisms $\psi$ and corresponding non-affine torus maps.

\section{The divisibility condition}

Although it suffices to consider irreducible morphisms, most results in this paper remain valid in the general case. Therefore we will restrict to irreducible morphisms only when strictly necessary.

Consider an arbitrary morphism $\psi=(\phi_1,\ldots,\phi_n;\sigma):\Z^k\to (\Z^k)^n\rtimes \S_n$. Note that, even though $\psi$ need not be induced by an $n$-valued map, it still makes sense to talk about $\sigma$-classes and the groups $S_i=\{\bar{z}\in \Z^k \mid \sigma_{\bar{z}}(i)=i \}$ as above. Then we have the following.

\begin{lem}
Let $\psi=(\phi_1,\ldots,\phi_n;\sigma):\Z^k\to (\Z^k)^n\rtimes \S_n$ be a morphism.
\begin{itemize}[nolistsep]
\item[\rm(i)] The maps $\phi_i:\Z^k\to \Z^k$ satisfy $\phi_i(\bar{z}_1+\bar{z}_2)=\phi_i(\bar{z}_1)+\phi_{(\sigma_{\bar{z}_1})^{-1}(i)}(\bar{z}_2)$ for all $i,\bar{z}_1,\bar{z}_2$. In particular, for all $i$, the restriction of $\phi_i$ to $S_i$ is a group morphism.
\medskip
\item[\rm(ii)] The map $\sigma:\Z^k\to \S_n$ satisfies $\sigma_{\bar{z}_1+\bar{z}_2}=\sigma_{\bar{z}_1}\sigma_{\bar{z}_2}$ for all $\bar{z}_1,\bar{z}_2$. In particular, $\sigma_{\bar{z}_1}\sigma_{\bar{z}_2}=\sigma_{\bar{z}_2}\sigma_{\bar{z}_1}$ for all $\bar{z}_1,\bar{z}_2$.
\end{itemize}
\end{lem}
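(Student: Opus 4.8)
The plan is to unfold the defining property of a group morphism, $\psi(\bar z_1+\bar z_2)=\psi(\bar z_1)\,\psi(\bar z_2)$, using the explicit multiplication rule in $(\Z^k)^n\rtimes\S_n$ recalled above, and then compare the two sides coordinate by coordinate. Writing $\psi(\bar z)=(\phi_1(\bar z),\ldots,\phi_n(\bar z);\sigma_{\bar z})$ and applying the product rule $(\alpha_1,\ldots,\alpha_n;\sigma)(\beta_1,\ldots,\beta_n;\tau)=(\alpha_1\beta_{\sigma^{-1}(1)},\ldots,\alpha_n\beta_{\sigma^{-1}(n)};\sigma\tau)$ to the right-hand side, equating with $\psi(\bar z_1+\bar z_2)$ on the left, and remembering that the first $n$ coordinates live in $\Z^k$ and are therefore written additively, I obtain exactly
\[
\phi_i(\bar z_1+\bar z_2)=\phi_i(\bar z_1)+\phi_{\sigma_{\bar z_1}^{-1}(i)}(\bar z_2)\ \ (1\le i\le n)\qquad\text{and}\qquad \sigma_{\bar z_1+\bar z_2}=\sigma_{\bar z_1}\sigma_{\bar z_2}.
\]

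For part (ii), the second identity is precisely the claim that $\sigma$ is a morphism, and the ``in particular'' is immediate: since $\Z^k$ is abelian, $\sigma_{\bar z_1}\sigma_{\bar z_2}=\sigma_{\bar z_1+\bar z_2}=\sigma_{\bar z_2+\bar z_1}=\sigma_{\bar z_2}\sigma_{\bar z_1}$. For part (i), the first identity is the asserted quasi-additivity of $\phi_i$. To deduce that $\phi_i|_{S_i}$ is a morphism, I would first take $\bar z_1=\bar z_2=\bar 0$ and use $\sigma_{\bar 0}=\mathrm{id}$ to see that $\phi_i(\bar 0)=\bar 0$; then, if $\bar z_1\in S_i$ one has $\sigma_{\bar z_1}^{-1}(i)=i$, so the identity collapses to $\phi_i(\bar z_1+\bar z_2)=\phi_i(\bar z_1)+\phi_i(\bar z_2)$, and specializing further to $\bar z_1,\bar z_2\in S_i$ shows that $\phi_i$ restricted to $S_i$ is additive, hence a group morphism (using $\phi_i(\bar 0)=\bar 0$ and that $\Z^k$ has inverses).

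There is essentially no serious obstacle here; the only points demanding care are the bookkeeping of the permutation indices (it is $\sigma_{\bar z_1}^{-1}(i)$, not $\sigma_{\bar z_1}(i)$, that appears, a consequence of the convention in the product formula) and the consistent translation between the multiplicative notation in the semidirect product and the additive notation in $\Z^k$.
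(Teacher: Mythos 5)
Your proposal is correct and follows exactly the paper's argument: expand $\psi(\bar z_1+\bar z_2)=\psi(\bar z_1)\psi(\bar z_2)$ via the semidirect product multiplication rule and compare coordinates. Your additional spelling-out of the ``in particular'' statements (additivity on $S_i$ from $\sigma_{\bar z_1}^{-1}(i)=i$, and commutativity of the $\sigma_{\bar z}$ from commutativity of $\Z^k$) is accurate and merely makes explicit what the paper leaves to the reader.
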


\begin{proof}
Let $\bar{z}_1,\bar{z}_2\in \Z^k$ be arbitrary. Then \begin{align*}
 \psi(\bar{z}_1+\bar{z}_2)&=(\phi_1(\bar{z}_1+\bar{z}_2),\ldots,\phi_n(\bar{z}_1+\bar{z}_2);\sigma_{\bar{z}_1+\bar{z}_2}) \\
\psi(\bar{z}_1)\psi(\bar{z}_2) &= (\phi_1(\bar{z}_1)+\phi_{(\sigma_{\bar{z}_1})^{-1}(1)}(\bar{z}_2),\ldots,\phi_n(\bar{z}_1)+\phi_{(\sigma_{\bar{z}_1})^{-1}(n)}(\bar{z}_2);\sigma_{\bar{z}_1}\sigma_{\bar{z}_2}).
\end{align*}
Since $\psi$ is a group morphism, these must be equal. Viewing the equality coordinate by coordinate gives the result.
\end{proof}

For all $i$ and $\bar{z}$, denote by $n_{i\bar{z}}$ the length of the cycle in the disjoint cycle decomposition of $\sigma_{\bar{z}}$ containing $i$. Note that $n_{i\bar{z}}\bar{z}\in S_i$, since $\sigma_{n_{i\bar{z}}\bar{z}}(i)=(\sigma_{\bar{z}})^{n_{i\bar{z}}}(i)=i$. Additionally,

\begin{lem}\label{lem:n_iz-phi_i}
If $i$ and $j$ belong to the same $\sigma$-class, then
\begin{itemize}[nolistsep]
\item[\rm(i)] $S_i=S_{j}$
\item[\rm(ii)] $n_{i\bar{z}}=n_{j\bar{z}}$ for all $\bar{z}\in \Z^k$
\item[\rm(iii)] $\phi_i(\bar{z})=\phi_{j}(\bar{z})$ for all $\bar{z}\in S_i=S_{j}$.
\end{itemize}
\end{lem}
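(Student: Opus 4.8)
The plan is to first handle the "adjacent" case where $j = \sigma_{\bar z_0}(i)$ for a single $\bar z_0 \in \Z^k$, and then bootstrap to the general case by a chain of such steps (since $i$ and $j$ lie in the same $\sigma$-class precisely when there is some $\bar z_0$ with $\sigma_{\bar z_0}(i) = j$, the single-step case already covers everything, but I want to keep the argument clean). So fix $\bar z_0$ with $\sigma_{\bar z_0}(i) = j$.

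For (i): I would show $S_i \subseteq S_j$ and conclude by symmetry. Take $\bar z \in S_i$, i.e.\ $\sigma_{\bar z}(i) = i$. Since the permutations $\sigma_{\bar w}$ pairwise commute (Lemma, part (ii)), we get $\sigma_{\bar z}(j) = \sigma_{\bar z}\sigma_{\bar z_0}(i) = \sigma_{\bar z_0}\sigma_{\bar z}(i) = \sigma_{\bar z_0}(i) = j$, so $\bar z \in S_j$. The reverse inclusion follows since $i = \sigma_{-\bar z_0}(j) = \sigma_{\bar z_0}^{-1}(j)$, so the roles of $i$ and $j$ are symmetric. Part (ii) is essentially the same computation: the cycle of $\sigma_{\bar z}$ through $j$ is the $\sigma_{\bar z_0}$-image of the cycle through $i$, because $\sigma_{\bar z_0}$ commutes with $\sigma_{\bar z}$ and hence conjugates the cycle decomposition of $\sigma_{\bar z}$ to itself while sending $i \mapsto j$; conjugation preserves cycle lengths, so $n_{i\bar z} = n_{j\bar z}$.

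For (iii), the key input is part (i) of the previous Lemma: $\phi_i(\bar z_1 + \bar z_2) = \phi_i(\bar z_1) + \phi_{\sigma_{\bar z_1}^{-1}(i)}(\bar z_2)$. Fix $\bar z \in S_i = S_j$. I will compute $\phi_i(\bar z_0 + \bar z)$ in two ways. On one hand, $\phi_i(\bar z_0 + \bar z) = \phi_i(\bar z_0) + \phi_{\sigma_{\bar z_0}^{-1}(i)}(\bar z)$. On the other hand, using commutativity $\bar z_0 + \bar z = \bar z + \bar z_0$ and the cocycle identity again, $\phi_i(\bar z + \bar z_0) = \phi_i(\bar z) + \phi_{\sigma_{\bar z}^{-1}(i)}(\bar z_0) = \phi_i(\bar z) + \phi_i(\bar z_0)$, since $\bar z \in S_i$ gives $\sigma_{\bar z}^{-1}(i) = i$. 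Equating the two expressions yields $\phi_{\sigma_{\bar z_0}^{-1}(i)}(\bar z) = \phi_i(\bar z)$, i.e.\ $\phi_{i'}(\bar z) = \phi_i(\bar z)$ where $i' = \sigma_{\bar z_0}^{-1}(i) = \sigma_{-\bar z_0}(i)$. This proves the statement when $j = \sigma_{-\bar z_0}(i)$; since $\bar z_0$ was arbitrary with only the constraint that it relates $i$ and $j$ via $\sigma$, and since $-\bar z_0$ ranges over all such relating elements as $\bar z_0$ does, this is exactly the claim $\phi_i(\bar z) = \phi_j(\bar z)$ for $j$ in the same $\sigma$-class as $i$ and $\bar z \in S_i = S_j$.

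The only mild subtlety — the part I would be most careful about — is the bookkeeping with inverses and signs in the last step: making sure that "$j$ in the same $\sigma$-class as $i$" really is equivalent to "$j = \sigma_{\bar w}(i)$ for some $\bar w$" (it is, by definition of the relation, once one notes $\sigma_{-\bar w} = \sigma_{\bar w}^{-1}$), and that the index $\sigma_{\bar z_0}^{-1}(i)$ appearing from the cocycle formula matches the $j$ we want. Everything else is a direct consequence of the two parts of the preceding Lemma plus commutativity of $\Z^k$; no genuine obstacle is expected.
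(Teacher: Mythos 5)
Your proof is correct and follows essentially the same route as the paper's: commutativity of the $\sigma_{\bar z}$'s handles (i) and (ii), and evaluating $\phi_i(\bar z+\bar z_0)$ in two ways via the identity $\phi_i(\bar z_1+\bar z_2)=\phi_i(\bar z_1)+\phi_{(\sigma_{\bar z_1})^{-1}(i)}(\bar z_2)$ gives (iii). The only differences are cosmetic (your convention $j=\sigma_{\bar z_0}(i)$ versus the paper's $j=(\sigma_{\bar z_0})^{-1}(i)$, which you reconcile correctly, and your conjugation phrasing of (ii) in place of the paper's direct check that $(\sigma_{\bar z})^{n_{j\bar z}}$ fixes $i$).
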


\begin{proof}
Take $\bar{z}_0\in\Z^k$ such that $j=(\sigma_{\bar{z}_0})^{-1}(i)$.

For (i), suppose $\bar{z}\in S_i$, so $i=\sigma_{\bar{z}}(i)$. Applying $({\sigma_{\bar{z}_0}})^{-1}$ to both sides yields \[
({\sigma_{\bar{z}_0}})^{-1}(i)=({\sigma_{\bar{z}_0}})^{-1}\sigma_{\bar{z}}(i)=\sigma_{\bar{z}}({\sigma_{\bar{z}_0}})^{-1}(i).
\] 
That is, $j=\sigma_{\bar{z}}(j)$, which means $\bar{z}\in S_{j}$. The converse inclusion follows by interchanging the roles of $i$ and $j$.

For (ii), notice that for all $\bar{z}\in \Z^k$, \[
(\sigma_{\bar{z}})^{n_{j\bar{z}}}(i)=(\sigma_{\bar{z}})^{n_{j\bar{z}}}\sigma_{\bar{z}_0}(j)=\sigma_{\bar{z}_0}(\sigma_{\bar{z}})^{n_{j\bar{z}}}(j)=\sigma_{\bar{z}_0}(j)=i,
\]
so we have $n_{i\bar{z}}\leq n_{j\bar{z}}$. Interchanging the roles of $i$ and $j$ gives the converse inequality.

For (iii), suppose $\bar{z}\in S_i$. Then we have \begin{align*}
\phi_i(\bar{z})+\phi_i(\bar{z}_0)&=\phi_i(\bar{z})+\phi_{(\sigma_{\bar{z}})^{-1}(i)}(\bar{z}_0) \\
&=\phi_i(\bar{z}+\bar{z}_0) \\&=\phi_i(\bar{z}_0)+\phi_{(\sigma_{\bar{z}_0})^{-1}(i)}(\bar{z})\\&=\phi_i(\bar{z}_0)+\phi_{j}(\bar{z}).
\end{align*}
It follows that $\phi_i(\bar{z})=\phi_{j}(\bar{z})$.
\end{proof}

\begin{lem}\label{lem:aff-wd}
If $f:T^k\to D_n(T^k)$ is an affine map with lift $\tilde{f}:\bar{t}\mapsto (A_1\bar{t}+\bar{a}_1,\ldots,A_n\bar{t}+\bar{a}_n)$ such that $\psi_{\tilde{f}}=(\phi_1,\ldots,\phi_n;\sigma)$, the maps $\phi_i$ and $\sigma$ satisfy \[
\left\{
\begin{array}{l}
A_i\bar{z}+\bar{a}_i=\bar{a}_{(\sigma_{\bar{z}})^{-1}(i)}+\phi_i(\bar{z}) \\
A_i=A_{(\sigma_{\bar{z}})^{-1}(i)}.
\end{array}
\right.
\]
for all $i$ and $\bar{z}$.
\end{lem}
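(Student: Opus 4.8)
The starting point is the defining relation of the induced morphism, namely equation~\eqref{eq:psi}, which for the chosen lift $\tilde f=(\tilde f_1,\dots,\tilde f_n)$ and every $\bar z\in\Z^k$ reads
\[
(\tilde f_1\circ\bar z,\ldots,\tilde f_n\circ\bar z)=\bigl(\phi_1(\bar z)\cdot\tilde f_{(\sigma_{\bar z})^{-1}(1)},\ldots,\phi_n(\bar z)\cdot\tilde f_{(\sigma_{\bar z})^{-1}(n)}\bigr),
\]
an identity of $n$-tuples of maps $\R^k\to\R^k$. The plan is simply to substitute the affine lift-factors $\tilde f_i(\bar t)=A_i\bar t+\bar a_i$ into this identity, recalling that the covering transformation $\bar z\in\Z^k$ acts on $\R^k$ by the translation $\bar t\mapsto\bar t+\bar z$, and that the component $\phi_i(\bar z)\in\Z^k$ of $\psi_{\tilde f}(\bar z)$ likewise acts on $\R^k$ by translation.

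First I would fix $i\in\{1,\dots,n\}$ and $\bar z\in\Z^k$ and evaluate the $i$-th component of both sides at an arbitrary $\bar t\in\R^k$. The left-hand side gives $(\tilde f_i\circ\bar z)(\bar t)=A_i(\bar t+\bar z)+\bar a_i=A_i\bar t+A_i\bar z+\bar a_i$. Writing $j=(\sigma_{\bar z})^{-1}(i)$ for brevity, the right-hand side gives $\phi_i(\bar z)+\tilde f_{j}(\bar t)=A_{j}\bar t+\bar a_{j}+\phi_i(\bar z)$. Hence for all $\bar t\in\R^k$,
\[
A_i\bar t+A_i\bar z+\bar a_i=A_{j}\bar t+\bar a_{j}+\phi_i(\bar z).
\]

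Next I would compare the two sides as affine maps of $\bar t$: two affine maps $\R^k\to\R^k$ coincide identically precisely when their linear parts and their constant parts coincide. Taking the derivative (coefficient of $\bar t$) yields $A_i=A_{j}=A_{(\sigma_{\bar z})^{-1}(i)}$, which is the second asserted equality; evaluating at $\bar t=\bar 0$ (or subtracting the linear part) yields $A_i\bar z+\bar a_i=\bar a_{j}+\phi_i(\bar z)=\bar a_{(\sigma_{\bar z})^{-1}(i)}+\phi_i(\bar z)$, which is the first. Since $i$ and $\bar z$ were arbitrary, this proves the lemma.

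There is no real obstacle here: the only point requiring care is the bookkeeping of the semidirect-product action in $(\Z^k)^n\rtimes\S_n$ — in particular keeping track that the $i$-th lift-factor on the left matches the $(\sigma_{\bar z})^{-1}(i)$-th lift-factor on the right — together with the observation that \eqref{eq:psi} is an equality of functions, so that it may be split into its linear and translational parts.
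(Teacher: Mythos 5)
Your proof is correct and follows essentially the same route as the paper: substitute the affine lift-factors into equation~\eqref{eq:psi}, obtain the identity $A_i\bar t+A_i\bar z+\bar a_i=A_{(\sigma_{\bar z})^{-1}(i)}\bar t+\bar a_{(\sigma_{\bar z})^{-1}(i)}+\phi_i(\bar z)$ for all $\bar t$, and separate linear and constant parts. The paper does this by evaluating at $\bar t=\bar 0$ and substituting back rather than by ``taking the derivative,'' but the content is identical.
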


\begin{proof}
For an affine map with lift $\tilde{f}$ as above, equation (\ref{eq:psi}) in coordinate $i$ reduces to \[
A_i\bar{t}+A_i\bar{z}+\bar{a}_i=A_{(\sigma_{\bar{z}})^{-1}(i)}\bar{t}+\bar{a}_{(\sigma_{\bar{z}})^{-1}(i)}+\phi_i(\bar{z}).
\]
Filling in $\bar{t}=\bar{0}$ implies \[
A_i\bar{z}+\bar{a}_i=\bar{a}_{(\sigma_{\bar{z}})^{-1}(i)}+\phi_i(\bar{z}).
\]
Plugging this back into the previous equation then yields $A_i\bar{t}=A_{(\sigma_{\bar{z}})^{-1}(i)}\bar{t}$ for all $\bar{t}\in \R^k$. 
Hence the matrices $A_i$ and $A_{(\sigma_{\bar{z}})^{-1}(i)}$ must be equal.
\end{proof}

\begin{lem}\label{lem:nv-ai-diff}
If a morphism $\psi=(\phi_1,\ldots,\phi_n;\sigma):\Z^k\to (\Z^k)^n\rtimes \S_n$ is induced by an affine map $f:T^k\to D_n(T^k)$ with lift $\tilde{f}:\bar{t}\mapsto (A_1\bar{t}+\bar{a}_1,\ldots,A_n\bar{t}+\bar{a}_n)$, then $\bar{a}_i-\bar{a}_{(\sigma_{\bar{z}})^{-1}(i)}\notin\Z^k$ whenever $(\sigma_{\bar{z}})^{-1}(i)\neq i$.
\end{lem}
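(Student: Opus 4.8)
The plan is to exploit that, by the very definition of an affine $n$-valued map, the lift $\tilde f$ takes values in the orbit configuration space $F_n(\R^k,\Z^k)$, so the lift-factors can never become equal modulo $\Z^k$; combining this with Lemma~\ref{lem:aff-wd} to cancel the dependence on $\bar t$ will immediately yield the statement.

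Concretely, I would fix $\bar z\in\Z^k$ and an index $i$ with $j:=(\sigma_{\bar z})^{-1}(i)\neq i$. For every $\bar t\in\R^k$ the tuple $(A_1\bar t+\bar a_1,\ldots,A_n\bar t+\bar a_n)$ lies in $F_n(\R^k,\Z^k)$, which forces $p(A_i\bar t+\bar a_i)\neq p(A_j\bar t+\bar a_j)$, i.e.
\[
(A_i-A_j)\bar t+(\bar a_i-\bar a_j)\notin\Z^k .
\]
Now invoke Lemma~\ref{lem:aff-wd}: since $j=(\sigma_{\bar z})^{-1}(i)$, we have $A_i=A_{(\sigma_{\bar z})^{-1}(i)}=A_j$, so the left-hand side reduces to $\bar a_i-\bar a_j$, independent of $\bar t$. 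Therefore $\bar a_i-\bar a_{(\sigma_{\bar z})^{-1}(i)}=\bar a_i-\bar a_j\notin\Z^k$, which is exactly the claim.

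There is essentially no genuinely hard step here; the only point requiring care is to apply the equality $A_i=A_{(\sigma_{\bar z})^{-1}(i)}$ from Lemma~\ref{lem:aff-wd} \emph{before} drawing the conclusion, so that the term linear in $\bar t$ disappears and the non-collision condition in $F_n(\R^k,\Z^k)$ becomes a statement purely about the translation vectors $\bar a_i$.
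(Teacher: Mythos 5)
Your proposal is correct and follows essentially the same route as the paper: use $n$-valuedness to get $p(A_i\bar t+\bar a_i)\neq p(A_{(\sigma_{\bar z})^{-1}(i)}\bar t+\bar a_{(\sigma_{\bar z})^{-1}(i)})$, then apply the equality $A_i=A_{(\sigma_{\bar z})^{-1}(i)}$ from Lemma~\ref{lem:aff-wd} to reduce this to a statement about the translation vectors alone. No gaps.
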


\begin{proof}
Suppose $(\sigma_{\bar{z}})^{-1}(i)\neq i$. Since $f$ is $n$-valued, we have \[ 
p(A_i\bar{t}+\bar{a}_i)\neq p(A_{(\sigma_{\bar{z}})^{-1}(i)}\bar{t}+\bar{a}_{(\sigma_{\bar{z}})^{-1}(i)}).
\]
Since $A_i=A_{(\sigma_{\bar{z}})^{-1}(i)}$ by the previous lemma, it follows that $\bar{a}_i-\bar{a}_{(\sigma_{\bar{z}})^{-1}(i)}\notin\Z^k$.
\end{proof}

The condition $\bar{a}_i-\bar{a}_{(\sigma_{\bar{z}})^{-1}(i)}\notin\Z^k$ can be rewritten purely in terms of the morphism $\psi$:

\begin{lem}\label{lem:ai-diff-phi}
Let $\psi=(\phi_1,\ldots,\phi_n;\sigma):\Z^k\to (\Z^k)^n\rtimes \S_n$ be an arbitrary morphism. If $A_i\in \R^{k\times k}$ and $\bar{a}_i\in \R^k$ are matrices and points such that $A_i\bar{z}+\bar{a}_i=\bar{a}_{(\sigma_{\bar{z}})^{-1}(i)}+\phi_i(\bar{z})$ for all $i$ and $\bar{z}$, then some given $i$ and $\bar{z}$ satisfy $\bar{a}_i-\bar{a}_{(\sigma_{\bar{z}})^{-1}(i)}\in\Z^k$ if and only if $\phi_i(n_{i\bar{z}}\bar{z})\in n_{i\bar{z}}\Z^k$.
\end{lem}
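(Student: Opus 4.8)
The plan is to read off the claimed equivalence directly from the hypothesis $A_i\bar{z}+\bar{a}_i=\bar{a}_{(\sigma_{\bar{z}})^{-1}(i)}+\phi_i(\bar{z})$, by evaluating it at just the two group elements $\bar{z}$ and $n_{i\bar{z}}\bar{z}$ for the fixed index $i$ under consideration. For brevity write $m=n_{i\bar{z}}$, a positive integer, and $j=(\sigma_{\bar{z}})^{-1}(i)$, and recall that each $\phi_i$ takes values in $\Z^k$. Note that the second relation of Lemma~\ref{lem:aff-wd} is not needed here: only the displayed identity is assumed.

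First I would specialise the hypothesis to the element $m\bar{z}$. Recall from the text preceding Lemma~\ref{lem:n_iz-phi_i} that $m\bar{z}\in S_i$, that is $\sigma_{m\bar{z}}(i)=i$, hence also $(\sigma_{m\bar{z}})^{-1}(i)=i$. Therefore the hypothesis applied to the pair $(i,m\bar{z})$ reads $mA_i\bar{z}+\bar{a}_i=\bar{a}_i+\phi_i(m\bar{z})$, so that $\phi_i(m\bar{z})=mA_i\bar{z}$; the point is that the constant term $\bar{a}_i$ cancels precisely because $\sigma_{m\bar{z}}$ fixes $i$. Next I would specialise the hypothesis to $\bar{z}$ itself, which gives $A_i\bar{z}-\phi_i(\bar{z})=\bar{a}_j-\bar{a}_i$.

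Finally I would chain these together. Since $\phi_i(\bar{z})\in\Z^k$, the second identity shows that $\bar{a}_i-\bar{a}_j\in\Z^k$ if and only if $A_i\bar{z}\in\Z^k$. On the other hand, because $m\geq 1$ is an integer, $A_i\bar{z}\in\Z^k$ if and only if $mA_i\bar{z}\in m\Z^k$, and by the first identity $mA_i\bar{z}=\phi_i(m\bar{z})$, so this last condition is exactly $\phi_i(n_{i\bar{z}}\bar{z})\in n_{i\bar{z}}\Z^k$. Stringing the equivalences together yields $\bar{a}_i-\bar{a}_{(\sigma_{\bar{z}})^{-1}(i)}\in\Z^k\iff\phi_i(n_{i\bar{z}}\bar{z})\in n_{i\bar{z}}\Z^k$, as claimed. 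There is essentially no serious obstacle in this argument; the only step that requires a moment's attention is the observation that $\sigma_{m\bar{z}}$ fixes $i$, which is already recorded in the text, and the elementary fact that membership of $mA_i\bar z$ in $m\Z^k$ is equivalent to membership of $A_i\bar z$ in $\Z^k$.
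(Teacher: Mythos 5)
Your proof is correct and follows essentially the same route as the paper's: evaluate the defining identity at $\bar{z}$ to relate $\bar{a}_i-\bar{a}_{(\sigma_{\bar{z}})^{-1}(i)}$ to $A_i\bar{z}$ modulo $\Z^k$, then evaluate at $n_{i\bar{z}}\bar{z}\in S_i$ to get $A_i\,n_{i\bar{z}}\bar{z}=\phi_i(n_{i\bar{z}}\bar{z})$ and chain the equivalences. The only difference is presentational; your write-up spells out the cancellation of $\bar{a}_i$ a bit more explicitly.
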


\begin{proof}
Since $\phi_i(\bar{z})\in \Z^k$, the condition $A_i\bar{z}+\bar{a}_i=\bar{a}_{(\sigma_{\bar{z}})^{-1}(i)}+\phi_i(\bar{z})$ implies that $\bar{a}_i-\bar{a}_{(\sigma_{\bar{z}})^{-1}(i)}\in\Z^k$ if and only if $A_i\bar{z}\in\Z^k$. On the other hand, evaluating at $n_{i\bar{z}}\bar{z}\in S_i$ gives $A_i\,n_{i\bar{z}}\bar{z}=\phi_i(n_{i\bar{z}}\bar{z})$, so $A_i\bar{z}=\phi_i(n_{i\bar{z}}\bar{z})/n_{i\bar{z}}$; and 
hence $A_i\bar{z}\in\Z^k$ if and only if $\phi_i(n_{i\bar{z}}\bar{z})\in n_{i\bar{z}}\Z^k$.
\end{proof}

As a consequence, we can now formulate a necessary condition for a morphism $\psi$ to be induced by an affine map.

\begin{thm}\label{thm:nec-cond}
Given a morphism $\psi=(\phi_1,\ldots,\phi_n;\sigma):\Z^k\to (\Z^k)^n\rtimes \S_n$, let the sets $S_i$ and the numbers $n_{i\bar{z}}$ be as above. If $\phi_i(n_{i\bar{z}}\bar{z})\in n_{i\bar{z}}\Z^k$ for some $i$ and $\bar{z}$ such that $\bar{z}\notin S_i$, then $\psi$ cannot be induced by an affine $n$-valued map.
\end{thm}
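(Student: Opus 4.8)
The plan is to argue by contradiction, feeding the hypothesis through the chain of lemmas just established. Suppose, for the sake of contradiction, that $\psi$ \emph{is} induced by an affine $n$-valued map $f:T^k\to D_n(T^k)$ with lift $\tilde{f}:\bar{t}\mapsto(A_1\bar{t}+\bar{a}_1,\ldots,A_n\bar{t}+\bar{a}_n)$ satisfying $\psi_{\tilde{f}}=\psi$. By Lemma~\ref{lem:aff-wd}, the data $A_i,\bar{a}_i$ then satisfy $A_i\bar{z}+\bar{a}_i=\bar{a}_{(\sigma_{\bar{z}})^{-1}(i)}+\phi_i(\bar{z})$ for all $i$ and $\bar{z}$; in particular, these matrices and points meet the hypotheses of Lemma~\ref{lem:ai-diff-phi}.

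Next I would apply Lemma~\ref{lem:ai-diff-phi} to the specific $i$ and $\bar{z}$ appearing in the statement. Since by assumption $\phi_i(n_{i\bar{z}}\bar{z})\in n_{i\bar{z}}\Z^k$, that lemma yields $\bar{a}_i-\bar{a}_{(\sigma_{\bar{z}})^{-1}(i)}\in\Z^k$. On the other hand, the condition $\bar{z}\notin S_i$ means precisely that $\sigma_{\bar{z}}(i)\neq i$, equivalently $(\sigma_{\bar{z}})^{-1}(i)\neq i$. Since $f$ is genuinely $n$-valued, Lemma~\ref{lem:nv-ai-diff} then applies and forces $\bar{a}_i-\bar{a}_{(\sigma_{\bar{z}})^{-1}(i)}\notin\Z^k$, contradicting the previous sentence. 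Hence no such affine $f$ can exist.

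The argument is essentially bookkeeping, so there is no serious obstacle: the substance was already developed in Lemmas~\ref{lem:aff-wd}--\ref{lem:ai-diff-phi}, and the theorem is just their combination. The only point deserving a moment's care is that Lemma~\ref{lem:ai-diff-phi} is stated for \emph{arbitrary} matrices $A_i$ and points $\bar{a}_i$ obeying the displayed affine relation, so one must first invoke Lemma~\ref{lem:aff-wd} to know that the $A_i,\bar{a}_i$ arising from an actual affine lift do obey it; once that is in place, Lemmas~\ref{lem:ai-diff-phi} and~\ref{lem:nv-ai-diff} pull in opposite directions and close the proof.
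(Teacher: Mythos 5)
Your proposal is correct and coincides with the paper's own proof: both invoke Lemma~\ref{lem:aff-wd} to establish the affine relation, then apply Lemma~\ref{lem:ai-diff-phi} to conclude $\bar{a}_i-\bar{a}_{(\sigma_{\bar{z}})^{-1}(i)}\in\Z^k$, contradicting Lemma~\ref{lem:nv-ai-diff} because $\bar{z}\notin S_i$. Nothing further is needed.
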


\begin{proof}
Suppose an affine map with lift $\tilde{f}:\bar{t}\mapsto (A_1\bar{t}+\bar{a}_1,\ldots,A_n\bar{t}+\bar{a}_n)$ induces $\psi$, and let $i$ and $\bar{z}$ be as above. Then Lemma \ref{lem:aff-wd} implies $A_i\bar{z}+\bar{a}_i=\bar{a}_{(\sigma_{\bar{z}})^{-1}(i)}+\phi_i(\bar{z})$, so Lemma \ref{lem:ai-diff-phi} implies $\bar{a}_i-\bar{a}_{(\sigma_{\bar{z}})^{-1}(i)}\in\Z^k$, which contradicts Lemma \ref{lem:nv-ai-diff} since $\bar{z}\notin S_i$.
\end{proof}

To obtain the converse of the above result, we restrict to irreducible morphisms $\psi$. In the irreducible case, it follows from Lemma \ref{lem:n_iz-phi_i} that the groups $S_i$, the numbers $n_{i\bar{z}}$ and the morphisms $\phi_i:S_i\to \Z^k$ do not depend on $i$; therefore we will denote them by $S$, $n_{\bar{z}}$ and $\phi:S\to \Z^k$, respectively. Note that outside of $S$, the maps $\phi_i$ may still differ.

\begin{thm}\label{thm:suff-cond}
If $\psi=(\phi_1,\ldots,\phi_n;\sigma):\Z^k\to (\Z^k)^n\rtimes \S_n$ is an irreducible morphism, with $S$, $n_{\bar{z}}$ and $\phi$ as above, then $\psi$ can be induced by an affine $n$-valued map if and only if $\phi(n_{\bar{z}}\bar{z})\notin n_{\bar{z}}\Z^k$ whenever $\bar{z}\notin S$.
\end{thm}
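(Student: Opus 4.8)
The statement has two directions. The forward one is the contrapositive of Theorem~\ref{thm:nec-cond}: in the irreducible case $S_i=S$, $n_{i\bar{z}}=n_{\bar{z}}$, and since $n_{i\bar{z}}\bar{z}\in S_i=S$ we have $\phi_i(n_{i\bar{z}}\bar{z})=\phi(n_{\bar{z}}\bar{z})$ by Lemma~\ref{lem:n_iz-phi_i}(iii); so an affine realization of $\psi$ forces $\phi(n_{\bar{z}}\bar{z})\notin n_{\bar{z}}\Z^k$ for every $\bar{z}\notin S$. The substance is the converse, which I would prove by explicitly building an affine $n$-valued map that induces $\psi$.

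The first step is to pin down the linear part. By Lemma~\ref{lem:aff-wd} together with irreducibility, all lift-factors of any affine realization must share one matrix $A$, and evaluating the first equation of Lemma~\ref{lem:aff-wd} at $\bar{z}\in S$ — where $\sigma_{\bar{z}}$ is trivial and $\phi_i=\phi$ — forces $A\bar{z}=\phi(\bar{z})$ there. Since $S$ has finite index in $\Z^k$ it is a full-rank lattice, so there is a unique $\mathbb{R}$-linear $A\colon\R^k\to\R^k$ extending $\phi|_S$; take that $A$. For the translation parts I would choose, for each $i$, an element $\bar{z}_i\in\Z^k$ with $\sigma_{\bar{z}_i}(i)=1$ (which exists by irreducibility; normalize $\bar{z}_1=\bar{0}$) and set $\bar{a}_i:=A\bar{z}_i-\phi_1(\bar{z}_i)$.

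The core of the proof is checking three properties of the $\bar{a}_i$, all routine but requiring care with the permutation indices. First, \emph{well-definedness}: if $\sigma_{\bar{z}}(i)=\sigma_{\bar{z}'}(i)=1$ then $\bar{z}-\bar{z}'\in S$, and since $A$ restricts to $\phi$ on $S$ the required equality $A\bar{z}-\phi_1(\bar{z})=A\bar{z}'-\phi_1(\bar{z}')$ reduces to $\phi_1(\bar{z})-\phi_1(\bar{z}')=\phi_1(\bar{z}-\bar{z}')$; this follows by writing $\bar{z}=\bar{z}'+(\bar{z}-\bar{z}')$, applying the identity $\phi_1(\bar{z}_1+\bar{z}_2)=\phi_1(\bar{z}_1)+\phi_{(\sigma_{\bar{z}_1})^{-1}(1)}(\bar{z}_2)$ from the first lemma of this section, and using that all $\phi_j$ agree on $S$ (Lemma~\ref{lem:n_iz-phi_i}(iii)). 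Second, the \emph{twisted-cocycle identity} $A\bar{z}+\bar{a}_i=\phi_i(\bar{z})+\bar{a}_{(\sigma_{\bar{z}})^{-1}(i)}$ for all $i,\bar{z}$: setting $j=(\sigma_{\bar{z}})^{-1}(i)$, one checks that $\bar{z}_i+\bar{z}$ is an admissible choice for $\bar{z}_j$, so by the first point $\bar{a}_j=A(\bar{z}_i+\bar{z})-\phi_1(\bar{z}_i+\bar{z})$, and expanding $\phi_1(\bar{z}_i+\bar{z})=\phi_1(\bar{z}_i)+\phi_i(\bar{z})$ (again by the first lemma, using $(\sigma_{\bar{z}_i})^{-1}(1)=i$) gives the identity. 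This single identity does two jobs: because $\phi_i(\bar{z})\in\Z^k$, it makes $\tilde{f}\colon\bar{t}\mapsto(A\bar{t}+\bar{a}_1,\dots,A\bar{t}+\bar{a}_n)$ descend along $p$ to a well-defined $f\colon T^k\to D_n(T^k)$, and it shows $\psi_{\tilde{f}}=\psi$ upon comparing $\tilde{f}(\bar{t}+\bar{z})$ with $\psi(\bar{z})\cdot\tilde{f}(\bar{t})$. Third, \emph{$n$-valuedness}, i.e.\ $\bar{a}_i-\bar{a}_j\notin\Z^k$ for $i\neq j$: this is the only place the hypothesis is used. Given $i\neq j$, irreducibility gives $\bar{z}$ with $(\sigma_{\bar{z}})^{-1}(i)=j$; then $\sigma_{\bar{z}}$ is nontrivial, so $\bar{z}\notin S$, and the cocycle identity yields $\bar{a}_i-\bar{a}_j=\phi_i(\bar{z})-A\bar{z}$, which lies in $\Z^k$ iff $A\bar{z}\in\Z^k$; but $n_{\bar{z}}\bar{z}\in S$ gives $A\bar{z}=\phi(n_{\bar{z}}\bar{z})/n_{\bar{z}}$, which by hypothesis is not integral. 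Hence $\tilde{f}$ takes values in $F_n(\R^k,\Z^k)$, $f$ is an affine $n$-valued map, and $\psi_{\tilde{f}}=\psi$.

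The main obstacle I anticipate is not conceptual but clerical: the well-definedness check and the cocycle identity both hinge on correctly composing the permutations $\sigma_{\bar{z}}$ and tracking which lift-factor index each occurrence of $\phi_i$ attaches to — the kind of computation where an off-by-one in the index derails the whole argument. Everything else follows from the lemmas already in place, with the divisibility hypothesis entering exactly once, to rule out a collapse $\bar{a}_i\equiv\bar{a}_j\pmod{\Z^k}$.
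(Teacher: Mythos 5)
Your proof is correct and takes essentially the same route as the paper's: the same matrix $A$ extending $\phi|_S$, the same translation vectors $\bar{a}_i=A\bar{z}_i-\phi_1(\bar{z}_i)$ with the same well-definedness and twisted-cocycle checks, and the same single use of the divisibility hypothesis to rule out $\bar{a}_i\equiv\bar{a}_j \pmod{\Z^k}$ (which the paper packages as Lemma \ref{lem:ai-diff-phi} and you unpack inline). No gaps.
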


\begin{proof}
The `only if' part is the content of Theorem \ref{thm:nec-cond}. 
To prove the `if' part, we construct an explicit affine $n$-valued map $f$ inducing the morphism $\psi$.

Since $\phi:S\to \Z^k$ is a morphism from a finite index subgroup of $\Z^k$ to $\Z^k$, it is given by multiplication with a matrix $A\in \Q^{k\times k}$. Notice that $A$ has coefficients in $\Q$ instead of $\Z$ since the image of $\phi$ is not necessarily contained in $S$.

On the other hand, we can define points $\bar{a}_1,\ldots,\bar{a}_n\in \R^k$ by setting \[
\bar{a}_{(\sigma_{\bar{z}})^{-1}(1)}=A\bar{z}-\phi_1(\bar{z})
\]
for all $\bar{z}$.
This determines all points $\bar{a}_i$ since any $i$ can be written as $(\sigma_{\bar{z}})^{-1}(1)$ for some $\bar{z}$, because $\psi$ is irreducible. If $(\sigma_{\bar{z}_1})^{-1}(1)=(\sigma_{\bar{z}_2})^{-1}(1)$ for different values $\bar{z}_1$ and $\bar{z}_2$, we obtain the same result for $\bar{a}_{(\sigma_{\bar{z}_1})^{-1}(1)}$ and $\bar{a}_{(\sigma_{\bar{z}_2})^{-1}(1)}$: since $\sigma$ is a morphism, we have $(\sigma_{\bar{z}_2-\bar{z}_1})^{-1}(1)=1$, so $\bar{z}_2-\bar{z}_1\in S$; and therefore $A(\bar{z}_2-\bar{z}_1)=\phi(\bar{z}_2-\bar{z}_1)$ and \begin{align*}
\bar{a}_{(\sigma_{\bar{z}_1})^{-1}(1)}&=A\bar{z}_1-\phi_1(\bar{z}_1) \\
&=A\bar{z}_2-\phi_1(\bar{z}_1)-\phi(\bar{z}_2-\bar{z}_1) \\
&=A\bar{z}_2-\phi_1(\bar{z}_1)-\phi_{(\sigma_{\bar{z}_1})^{-1}(1)}(\bar{z}_2-\bar{z}_1) \\
&=A\bar{z}_2-\phi_1(\bar{z}_1+(\bar{z}_2-\bar{z}_1)) \\
&=A\bar{z}_2-\phi_1(\bar{z}_2) \\
&=\bar{a}_{(\sigma_{\bar{z}_2})^{-1}(1)}.
\end{align*}
Also, these points $\bar{a}_i$ satisfy $A\bar{z}+\bar{a}_i=\bar{a}_{(\sigma_{\bar{z}})^{-1}(i)}+\phi_i(\bar{z})$ for all $i$ and $\bar{z}$. Indeed, write $i=(\sigma_{\bar{z}_0})^{-1}(1)$; then \begin{align*}
\bar{a}_{(\sigma_{\bar{z}})^{-1}(i)}-\bar{a}_i&=\bar{a}_{(\sigma_{\bar{z}+\bar{z}_0})^{-1}(1)}-\bar{a}_{(\sigma_{\bar{z}_0})^{-1}(1)} \\
&=(A(\bar{z}+\bar{z}_0)-\phi_1(\bar{z}+\bar{z}_0))-(A\bar{z}_0-\phi_1(\bar{z}_0)) \\
&=A\bar{z}+A\bar{z}_0-\phi_1(\bar{z}_0)-\phi_{(\sigma_{\bar{z}_0})^{-1}(1)}(\bar{z})-A\bar{z}_0+\phi_1(\bar{z}_0) \\
&=A\bar{z}-\phi_{(\sigma_{\bar{z}_0})^{-1}(1)}(\bar{z}) \\
&=A\bar{z}-\phi_i(\bar{z}).
\end{align*}
By Lemma \ref{lem:ai-diff-phi}, the condition that $\phi(n_{\bar{z}}\bar{z})\notin n_{\bar{z}}\Z^k$ whenever $\bar{z}\notin S$ implies that $\bar{a}_i-\bar{a}_{(\sigma_{\bar{z}})^{-1}(i)}\notin\Z^k$ whenever $(\sigma_{\bar{z}})^{-1}(i)\neq i$. Since $\psi$ is irreducible, this means that $\bar{a}_i-\bar{a}_{j}\notin\Z^k$ for any distinct $i$ and $j$, since there is always a $\bar{z}\in \Z^k$ such that $j=(\sigma_{\bar{z}})^{-1}(i)$. In particular, $p(A\bar{t}+\bar{a}_i)\neq p(A\bar{t}+\bar{a}_{j})$ for all $i\neq j$ and $\bar{t}$. It follows that this construction determines a well-defined affine $n$-valued map \[
f:T^k\to T^k:\bar{t}\mapsto \{p(A\bar{t}+\bar{a}_1),\ldots,p(A\bar{t}+\bar{a}_n) \}.
\] 
Since $A\bar{z}+\bar{a}_i=\bar{a}_{(\sigma_{\bar{z}})^{-1}(i)}+\phi_i(\bar{z})$ for all $i$ and $\bar{z}$, this map $f$ induces the morphism $\psi$ with respect to the choice of lift $\bar{t}\mapsto (A\bar{t}+\bar{a}_1,\ldots,A\bar{t}+\bar{a}_n)$.
\end{proof}

Note that the condition $\phi(n_{\bar{z}}\bar{z})\notin n_{\bar{z}}\Z^k$ is only used to prove that $\bar{a}_i-\bar{a}_{(\sigma_{\bar{z}})^{-1}(i)}\notin\Z^k$ whenever $(\sigma_{\bar{z}})^{-1}(i)\neq i$. Therefore it suffices to check this condition for one representative vector $\bar{z}$ for each value $(\sigma_{\bar{z}})^{-1}(i)$. For instance, if $\bar{e}_1,\ldots,\bar{e}_k$ is a basis for $\Z^k$ and we denote $\sigma_{\bar{e}_j}=\vcentcolon \sigma_j$ and $n_{\bar{e}_j}=\vcentcolon n_j$ for all $j\in \{1,\ldots,k\}$, it suffices to consider the vectors $\bar{z}$ in the finite set \[
\left\{m_1\bar{e}_1+\ldots+m_k\bar{e}_k \mid m_j=0,\ldots,n_j-1 \right\},
\]
since for any $\bar{z}=z_1\bar{e}_1+\ldots+z_k\bar{e}_k$, \[
\sigma_{\bar{z}}={\sigma_1}^{z_1}\cdots {\sigma_k}^{z_k}={\sigma_1}^{z_1\,\text{mod}\,n_1}\cdots {\sigma_k}^{z_k\,\text{mod}\,n_k}.
\]
This turns Theorem \ref{thm:suff-cond} into an explicit algorithm to check whether a given irreducible morphism $\psi$ can be induced by an affine map.

\section{The cycle condition}

Theorem \ref{thm:nec-cond} gives rise to the following more concrete necessary condition for $\psi$ to be induced by an affine map, which is useful for constructing examples of $n$-valued maps that are not homotopic to an affine $n$-valued map.

\begin{cor}\label{cor:phis-equal}
Suppose a morphism $\psi=(\phi_1,\ldots,\phi_n;\sigma):\Z^k\to (\Z^k)^n\rtimes \S_n$ is induced by an affine map. If $(i_1\ldots i_m)$ is a non-trivial cycle of $\sigma_{\bar{z}}$ for some $\bar{z}$, then the images $\phi_{i_1}(\bar{z}),\ldots,\phi_{i_m}(\bar{z})$ cannot all be equal.
\end{cor}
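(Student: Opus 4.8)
The plan is to derive a contradiction from Theorem~\ref{thm:nec-cond} by showing that if $\phi_{i_1}(\bar z)=\dots=\phi_{i_m}(\bar z)$ for a nontrivial $m$-cycle $(i_1\dots i_m)$ of $\sigma_{\bar z}$, then one of the hypotheses of that theorem is met for the index $i=i_1$ and the same vector $\bar z$. First I would observe that for a vector $\bar z$ whose cycle through $i_1$ has length $m$, we have $n_{i_1\bar z}=m$, and moreover $i_1\notin S_{i_1}$-translate, i.e. $\bar z\notin S_{i_1}$ precisely because the cycle $(i_1\dots i_m)$ is \emph{non-trivial}, so $m\geq 2$ and $\sigma_{\bar z}(i_1)=i_2\neq i_1$. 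Thus $\bar z\notin S_{i_1}$, which is exactly the side condition "$\bar z\notin S_i$" needed to invoke Theorem~\ref{thm:nec-cond}.

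Next I would compute $\phi_{i_1}(m\bar z)=\phi_{i_1}(n_{i_1\bar z}\bar z)$ using the cocycle identity from Lemma~1.8(i) (the lemma stating $\phi_i(\bar z_1+\bar z_2)=\phi_i(\bar z_1)+\phi_{(\sigma_{\bar z_1})^{-1}(i)}(\bar z_2)$). Iterating this identity $m$ times along the sum $\bar z+\bar z+\dots+\bar z$, the relevant indices that appear are $i_1$, then $(\sigma_{\bar z})^{-1}(i_1)$, then $(\sigma_{\bar z})^{-2}(i_1)$, and so on — i.e. the indices run through the cycle $(i_1 i_m i_{m-1}\dots i_2)$, which is precisely the set $\{i_1,\dots,i_m\}$. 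Hence
\[
\phi_{i_1}(m\bar z)=\phi_{i_1}(\bar z)+\phi_{(\sigma_{\bar z})^{-1}(i_1)}(\bar z)+\dots+\phi_{(\sigma_{\bar z})^{-(m-1)}(i_1)}(\bar z)=\phi_{i_{\pi(1)}}(\bar z)+\dots+\phi_{i_{\pi(m)}}(\bar z)
\]
for the appropriate reindexing. Under the hypothesis that all these $\phi_{i_\ell}(\bar z)$ are equal — say all equal to some $\bar v\in\Z^k$ — this sum is just $m\bar v$. Therefore $\phi_{i_1}(m\bar z)=m\bar v\in m\Z^k$, i.e. $\phi_{i_1}(n_{i_1\bar z}\bar z)\in n_{i_1\bar z}\Z^k$, with $\bar z\notin S_{i_1}$. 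By Theorem~\ref{thm:nec-cond}, $\psi$ cannot be induced by an affine map, contradicting the assumption.

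The only delicate point is bookkeeping the indices in the telescoping application of the cocycle identity: one must verify that the multiset of subscripts appearing in $\phi_{i_1}(\bar z)+\phi_{(\sigma_{\bar z})^{-1}(i_1)}(\bar z)+\dots$ is exactly $\{i_1,\dots,i_m\}$ with each element appearing once. This follows because $(\sigma_{\bar z})^{-1}$ cyclically permutes $\{i_1,\dots,i_m\}$ (it is the inverse of the $m$-cycle, hence itself an $m$-cycle on the same support), so the $m$ iterates $(\sigma_{\bar z})^{0}(i_1),\dots,(\sigma_{\bar z})^{-(m-1)}(i_1)$ are pairwise distinct and exhaust the support. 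I expect this index-tracking to be the main (though mild) obstacle; everything else is a direct citation of the cocycle identity and of Theorem~\ref{thm:nec-cond}.
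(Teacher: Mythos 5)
Your proposal is correct and follows essentially the same route as the paper: both establish the identity $\phi_{i_1}(n_{i_1\bar z}\bar z)=\phi_{i_1}(\bar z)+\phi_{(\sigma_{\bar z})^{-1}(i_1)}(\bar z)+\dots+\phi_{(\sigma_{\bar z})^{-(m-1)}(i_1)}(\bar z)$ (the paper by comparing the $i$-th coordinate of $\psi(n_{i\bar z}\bar z)$ with that of $\psi(\bar z)^{n_{i\bar z}}$, you by iterating the equivalent cocycle identity), conclude that the sum equals $m\bar v\in n_{i_1\bar z}\Z^k$ when all the $\phi_{i_\ell}(\bar z)$ coincide, and then invoke Theorem~\ref{thm:nec-cond} using $\bar z\notin S_{i_1}$. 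Your index bookkeeping (that $(\sigma_{\bar z})^{-1}$ is an $m$-cycle on the same support, so the $m$ iterates exhaust $\{i_1,\dots,i_m\}$) is also correct.
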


\begin{proof}
Take $i$ so that the indices in the cycle can be written as $i,(\sigma_{\bar{z}})^{-1}(i),\ldots,(\sigma_{\bar{z}})^{-(n_{i\bar{z}}-1)}(i)$. Note that \[
\phi_i(n_{i\bar{z}}\bar{z})=\phi_i(\bar{z})+\phi_{(\sigma_{\bar{z}})^{-1}(i)}(\bar{z})+\ldots+\phi_{(\sigma_{\bar{z}})^{-(n_{i\bar{z}}-1)}(i)}(\bar{z}).
\]
Indeed, this is the $i$-th coordinate of $\psi(n_{i\bar{z}}\bar{z})$, on the one hand computed directly as $\psi(n_{i\bar{z}}\bar{z})$, and on the other hand as $\psi(\bar{z})^{n_{i\bar{z}}}$.
If all images $\phi_{i_1}(\bar{z}),\ldots,\phi_{i_m}(\bar{z})$ are equal, the right hand side equals $n_{i\bar{z}}\phi_i(\bar{z})$. It follows that $\phi_i(n_{i\bar{z}}\bar{z})\in n_{i\bar{z}}\Z^k$.
\end{proof}

\begin{ex}\label{ex:1}
For $n\geq 2$, consider the map
\begin{align*}
f:T^2\multimap T^2: p(t_1,t_2)\mapsto \{p(\tilde{f}_1(t_1,t_2)),\ldots,p(\tilde{f}_n(t_1,t_2)) \}
\end{align*}
with $\tilde{f}_i(t_1,t_2)=\left(\frac{1}{4}\cos\left(2\pi\left(\frac{t_1}{n}+\frac{i-1}{n} \right)\right),\frac{1}{4}\sin\left(2\pi\left(\frac{t_1}{n}+\frac{i-1}{n} \right)\right)\right)$.

For all $t_1,t_2\in \R$, and for all $i$, we have 
\begin{equation}\label{eq:ex1wd}
\begin{aligned}
\tilde{f}_i(t_1+1,t_2)&=\tilde{f}_{i+1\,\text{mod}\,n}(t_1,t_2) \\
\tilde{f}_i(t_1,t_2+1)&=\tilde{f}_i(t_1,t_2).
\end{aligned}
\end{equation}
It follows that $f$ is well-defined.
To prove that $f$ is $n$-valued, suppose $\tilde{f}_i(t_1,t_2)\equiv \tilde{f}_{j}(t_1,t_2) \text{ mod }\Z^2$ for some $i$ and $j$. Then in particular \[
\left\{
\begin{array}{l}
\frac{1}{4}\cos\big(2\pi\big(\frac{t_1}{n}+\frac{i-1}{n} \big)\big) \equiv \frac{1}{4}\cos\big(2\pi\big(\frac{t_1}{n}+\frac{j-1}{n} \big)\big) \text{ mod }\Z \\
\frac{1}{4}\sin\big(2\pi\big(\frac{t_1}{n}+\frac{i-1}{n} \big)\big) \equiv \frac{1}{4}\sin\big(2\pi\big(\frac{t_1}{n}+\frac{j-1}{n} \big)\big) \text{ mod }\Z.
\end{array}
\right.
\]
Since the left and right hand side of both equations are numbers between $-\frac{1}{4}$ and $\frac{1}{4}$, this can only happen if they are actually equal, so \[
\left\{
\begin{array}{l}
\cos\big(2\pi\big(\frac{t_1}{n}+\frac{i-1}{n} \big)\big) = \cos\big(2\pi\big(\frac{t_1}{n}+\frac{j-1}{n} \big)\big) \\
\sin\big(2\pi\big(\frac{t_1}{n}+\frac{i-1}{n} \big)\big) = \sin\big(2\pi\big(\frac{t_1}{n}+\frac{j-1}{n} \big)\big),
\end{array}
\right.
\]
which is the case if and only if the arguments on the left and right differ up to an integer multiple of $2\pi$.
It follows that $\frac{i-j}{n}\in \Z$, which cannot happen for distinct $i,j\in \{1,\ldots,n\}$.

The equations (\ref{eq:ex1wd}) imply that $\psi_{\tilde{f}}$ is given by \[
\psi_{\tilde{f}}(z_1,z_2)=((0,0),\ldots,(0,0);(n\cdots 2\hspace{1pt}1)^{z_1}).
\]
In particular, we have $\psi_{\tilde{f}}(1,0)=((0,0),\ldots,(0,0);(n\cdots2\hspace{1pt}1))$, which would be impossible if $\psi_{\tilde{f}}$ were induced by an affine map, by Corollary \ref{cor:phis-equal}. It follows that the given map $f$ cannot be homotopic to an affine map.

Notice that this example can easily be extended to an $n$-valued map on $T^k$ for $k>2$ by adding zeroes in the other coordinates:
\[
T^k\multimap T^k: p(t_1,\ldots,t_k)\mapsto \{p(\tilde{f}_1(t_1,t_2),0,\ldots,0),\ldots,p(\tilde{f}_n(t_1,t_2),0,\ldots,0) \}.
\]
The above arguments still apply to this new map, so this way we get a non-affine $n$-valued map on $T^k$ for any $n\geq 2$ and $k\geq 2$.
\end{ex}

In the next section we will see that, after a smart choice of reference lift, any obstruction to affineness of the type of Theorem \ref{thm:nec-cond} reduces to one of the type of Corollary \ref{cor:phis-equal}.

\section{The influence of the choice of lift}

For a given $n$-valued map $f:T^k\multimap T^k$, we can investigate to what extent the morphism $\psi_{\tilde{f}}$ depends on the lift $\tilde{f}$ chosen to compute it. A first result is that, up to the order of the lift-factors, the morphisms $\sigma$ and $\phi_i:S_i\to \Z^k$ are independent of the chosen lift.

\begin{lem}
Let $f:T^k\multimap T^k$ be an $n$-valued map with lifts $\tilde{f}=(\tilde{f}_1,\ldots,\tilde{f}_n)$ and $\tilde{f}'=(\tilde{f}'_1,\ldots,\tilde{f}'_n)$, respectively inducing morphisms $\psi_{\tilde{f}}=(\phi_1,\ldots,\phi_n;\sigma)$ and $\psi_{\tilde{f}'}=(\phi'_1,\ldots,\phi'_n;\sigma')$ with corresponding groups $S_i$ and $S_i'$. If the lift-factors of $\tilde{f}$ and $\tilde{f}'$ are ordered in the same way, i.e. \[
(p(\tilde{f}_1(\bar{0})),\ldots,p(\tilde{f}_n(\bar{0})))=(p(\tilde{f}'_1(\bar{0})),\ldots,p(\tilde{f}'_n(\bar{0}))),
\] 
then $\sigma=\sigma'$ (and therefore also $S_i=S_i'$ for all $i$) and the restrictions of $\phi_i$ and $\phi'_i$ to $S_i$ are the same for all $i$.
\end{lem}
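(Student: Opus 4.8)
The plan is to use the standard fact that two lifts of the same map along a covering differ by a deck transformation. Since $\R^k$ is simply connected, both $\tilde{f}$ and $\tilde{f}'$ are lifts of $f\circ p:\R^k\to D_n(T^k)$ along the covering $p^n:F_n(\R^k,\Z^k)\to D_n(T^k)$, and hence there is an element $g=(\gamma_1,\ldots,\gamma_n;\tau)$ of the covering group $(\Z^k)^n\rtimes\S_n$ with $\tilde{f}'=g\cdot\tilde{f}$. In terms of lift-factors this reads $\tilde{f}'_i=\tilde{f}_{\tau^{-1}(i)}+\gamma_i$ for all $i$, writing the $\Z^k$-action on $\R^k$ additively.

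The first step is to show that the ordering hypothesis forces $\tau=\mathrm{id}$. Applying $p$ to the identity $\tilde{f}'_i(\bar{0})=\tilde{f}_{\tau^{-1}(i)}(\bar{0})+\gamma_i$ and using $\gamma_i\in\Z^k=\ker p$ gives $p(\tilde{f}'_i(\bar{0}))=p(\tilde{f}_{\tau^{-1}(i)}(\bar{0}))$, so the hypothesis yields $p(\tilde{f}_i(\bar{0}))=p(\tilde{f}_{\tau^{-1}(i)}(\bar{0}))$ for all $i$. Since $\{p(\tilde{f}_1(\bar{0})),\ldots,p(\tilde{f}_n(\bar{0}))\}=f(p(\bar{0}))$ consists of $n$ distinct points, the assignment $i\mapsto p(\tilde{f}_i(\bar{0}))$ is injective, whence $\tau^{-1}(i)=i$ for every $i$.

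The second step is a direct computation with the induced morphisms. From $\tilde{f}'=g\cdot\tilde{f}$ and the defining relation $\tilde{f}\circ\bar{z}=\psi_{\tilde{f}}(\bar{z})\circ\tilde{f}$ one obtains $\tilde{f}'\circ\bar{z}=\bigl(g\,\psi_{\tilde{f}}(\bar{z})\,g^{-1}\bigr)\circ\tilde{f}'$, so that $\psi_{\tilde{f}'}(\bar{z})=g\,\psi_{\tilde{f}}(\bar{z})\,g^{-1}$ for all $\bar{z}$. Now $g=(\gamma_1,\ldots,\gamma_n;\mathrm{id})$ and $g^{-1}=(-\gamma_1,\ldots,-\gamma_n;\mathrm{id})$, and multiplying out in $(\Z^k)^n\rtimes\S_n$ gives
\[
\psi_{\tilde{f}'}(\bar{z})=\bigl(\phi_1(\bar{z})+\gamma_1-\gamma_{\sigma_{\bar{z}}^{-1}(1)},\;\ldots,\;\phi_n(\bar{z})+\gamma_n-\gamma_{\sigma_{\bar{z}}^{-1}(n)};\;\sigma_{\bar{z}}\bigr).
\]
Reading off the permutation part yields $\sigma'=\sigma$, hence $S_i'=S_i$ for all $i$, and reading off the $i$-th coordinate yields $\phi'_i(\bar{z})=\phi_i(\bar{z})+\gamma_i-\gamma_{\sigma_{\bar{z}}^{-1}(i)}$. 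Finally, for $\bar{z}\in S_i$ one has $\sigma_{\bar{z}}^{-1}(i)=i$, so the correction term $\gamma_i-\gamma_{\sigma_{\bar{z}}^{-1}(i)}$ vanishes and $\phi'_i(\bar{z})=\phi_i(\bar{z})$, as required.

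The only genuinely delicate point is the first step: one must invoke that $f$ is honestly $n$-valued — so that the base-point images are pairwise distinct — to pin down $\tau=\mathrm{id}$; without the ordering hypothesis $\tau$ is only determined up to the ambiguity in labelling the lift-factors. Everything after that is bookkeeping in the semidirect product, and the step I expect to be most prone to index or sign slips is the explicit evaluation of the conjugate $g\,\psi_{\tilde{f}}(\bar{z})\,g^{-1}$.
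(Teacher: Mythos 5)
Your proof is correct and follows the same overall strategy as the paper's: both begin with the deck transformation $g=(\gamma_1,\ldots,\gamma_n;\tau)$ satisfying $\tilde{f}'=g\cdot\tilde{f}$, and both use the ordering hypothesis together with the $n$-valuedness of $f$ (distinctness of the points $p(\tilde{f}_i(\bar{0}))$) to force $\tau=\mathrm{id}$. The arguments diverge only in the endgame: the paper substitutes $\tilde{f}'_i=\tilde{f}_i+\gamma_i$ back into the two lift equations and projects to the base via $p$ to identify $\sigma$ with $\sigma'$, then compares the equations for $\bar{z}\in S_i$; you instead package the comparison as the conjugation identity $\psi_{\tilde{f}'}(\bar{z})=g\,\psi_{\tilde{f}}(\bar{z})\,g^{-1}$ and multiply out in $(\Z^k)^n\rtimes\S_n$. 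Your computation of the conjugate is right, and your route yields slightly more, namely the explicit transformation law $\phi'_i(\bar{z})=\phi_i(\bar{z})+\gamma_i-\gamma_{\sigma_{\bar{z}}^{-1}(i)}$ for \emph{all} $\bar{z}$, not just on $S_i$. The one point you pass over silently is that $\psi_{\tilde{f}'}(\bar{z})$ is the \emph{unique} covering-group element $h$ with $\tilde{f}'\circ\bar{z}=h\circ\tilde{f}'$, so that it may be identified with $g\,\psi_{\tilde{f}}(\bar{z})\,g^{-1}$; this holds because the action of $(\Z^k)^n\rtimes\S_n$ on $F_n(\R^k,\Z^k)$ is free (again by distinctness of projected coordinates), and the paper's direct substitution sidesteps the issue.
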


\begin{proof}
For all $\bar{t}$, $\bar{z}$ and $i$, we have \begin{equation}\label{eq:psi-different-lifts}
\left\{
\begin{array}{l}
\tilde{f}_i(\bar{t}+\bar{z})=\tilde{f}_{(\sigma_{\bar{z}})^{-1}(i)}(\bar{t})+\phi_i(\bar{z}) \\ \tilde{f}'_i(\bar{t}+\bar{z})=\tilde{f}'_{(\sigma'_{\bar{z}})^{-1}(i)}(\bar{t})+\phi'_i(\bar{z}).
\end{array}
\right.
\end{equation}
By covering space theory (applied to the coverings $p: \R^k \to \Z^k$ and 
 $p^n: F_n(\R^k, \Z^k) \to D_n(T^k)$), there is a covering translation  $(\bar{z}_1,\ldots,\bar{z}_n;\tau)\in (\Z^k)^n\rtimes \S_n$  such that \[
(\tilde{f}'_1,\ldots,\tilde{f}'_n)=(\bar{z}_1,\ldots,\bar{z}_n;\tau)(\tilde{f}_1,\ldots,\tilde{f}_n)=(\tilde{f}_{\tau^{-1}(1)}+\bar{z}_1,\ldots,\tilde{f}_{\tau^{-1}(n)}+\bar{z}_n).
\]
Since we assumed $p(\tilde{f}_i(\bar{0}))=p(\tilde{f}'_i(\bar{0}))$ for all $i$, we have \[
p(\tilde{f}_i(\bar{0}))=p(\tilde{f}_{\tau^{-1}(i)}(\bar{0})+\bar{z}_i)=p(\tilde{f}_{\tau^{-1}(i)}(\bar{0}))
\]
for all $i$. Since all images $p(\tilde{f}_i(\bar{0}))$ are distinct, it follows that $\tau^{-1}(i)=i$ for all $i$, and hence $\tilde{f}'_i=\tilde{f}_i+\bar{z}_i$. Plugging this into the equation (\ref{eq:psi-different-lifts}) gives \[
\left\{
\begin{array}{l}
\tilde{f}_i(\bar{t}+\bar{z})=\tilde{f}_{(\sigma_{\bar{z}})^{-1}(i)}(\bar{t})+\phi_i(\bar{z}) \\ \tilde{f}_i(\bar{t}+\bar{z})+\bar{z}_i=\tilde{f}_{(\sigma'_{\bar{z}})^{-1}(i)}(\bar{t})+\bar{z}_{(\sigma'_{\bar{z}})^{-1}(i)}+\phi'_i(\bar{z}).
\end{array}
\right.
\]
In particular, \[
\left\{
\begin{array}{l}
p(\tilde{f}_i(\bar{t}+\bar{z}))=p(\tilde{f}_{(\sigma_{\bar{z}})^{-1}(i)}(\bar{t})+\phi_i(\bar{z}))=p(\tilde{f}_{(\sigma_{\bar{z}})^{-1}(i)}(\bar{t})) \\ p(\tilde{f}_i(\bar{t}+\bar{z}))=p(\tilde{f}_{(\sigma'_{\bar{z}})^{-1}(i)}(\bar{t})+\bar{z}_{(\sigma'_{\bar{z}})^{-1}(i)}+\phi'_i(\bar{z})-\bar{z}_i)=p(\tilde{f}_{(\sigma'_{\bar{z}})^{-1}(i)}(\bar{t})),
\end{array}
\right.
\]
so that $\sigma_{\bar{z}}=\sigma'_{\bar{z}}$ for all $\bar{z}$. On the other hand, for $\bar{z}\in S_i=S_i'$, \[
\left\{
\begin{array}{l}
\tilde{f}_i(\bar{t}+\bar{z})=\tilde{f}_i(\bar{t})+\phi_i(\bar{z}) \\ \tilde{f}_i(\bar{t}+\bar{z})+\bar{z}_i=\tilde{f}_i(\bar{t})+\bar{z}_i+\phi'_i(\bar{z}),
\end{array}
\right.
\]
so $\phi_i(\bar{z})=\phi'_i(\bar{z})$.
\end{proof}

In particular, to check whether a given $n$-valued map is homotopic to an affine map using Theorem \ref{thm:nec-cond}, it does not matter with respect to which lift one considers the induced morphism, since this condition only depends on the sets $S_i$ and the values of $\phi_i$ on $S_i$. 

On the other hand, outside the sets $S_i$, the maps $\phi_i$ can attain all possible values, as far as the previous condition allows:

\begin{lem}\label{lem:lift-choice}
Let $f:T^k\multimap T^k$ be an $n$-valued map, and $\psi_{\tilde{f}}=(\phi_1,\ldots,\phi_n;\sigma)$ the induced morphism with respect to some lift $\tilde{f}$. 
Fix $i\in \{1,\ldots,n\}$ and $\bar{z}\in \Z^k$. For any possible decomposition $\phi_i(n_{i\bar{z}}\bar{z})=\bar{z}_0+\bar{z}_1+\ldots+\bar{z}_{n_{i\bar{z}}-1}$ into a sum of $n_{i\bar{z}}$ integer vectors, there is another lift $\tilde{f}'$ of $f$ with induced morphism $\psi_{\tilde{f}'}=(\phi'_1,\ldots,\phi'_n;\sigma')$ such that $\phi'_{(\sigma'_{\bar{z}})^{-m}(i)}(\bar{z})=\bar{z}_m$ for all $m=0,\ldots,n_{i\bar{z}}-1$.
\end{lem}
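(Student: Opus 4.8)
The plan is to realize the new lift by acting on $\tilde{f}$ with a covering translation whose permutation part is trivial, i.e.\ to set $\tilde{f}'_j=\tilde{f}_j+\bar{w}_j$ for a suitable tuple $(\bar{w}_1,\ldots,\bar{w}_n)\in(\Z^k)^n$. Because such a translation preserves the order of the lift-factors, the previous lemma guarantees $\sigma'=\sigma$, so in particular $(\sigma'_{\bar{z}})^{-m}(i)=(\sigma_{\bar{z}})^{-m}(i)$, and we retain full freedom to adjust the $\phi_i$ away from the stabilizers. (A translation with a nontrivial permutation part would reorder the lift-factors and is therefore to be avoided.)

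Next I would record how $\psi_{\tilde{f}'}$ depends on the $\bar{w}_j$. Substituting $\tilde{f}'_j=\tilde{f}_j+\bar{w}_j$ into the defining relation $\tilde{f}'_i(\bar{t}+\bar{z})=\tilde{f}'_{(\sigma_{\bar{z}})^{-1}(i)}(\bar{t})+\phi'_i(\bar{z})$ and comparing with equation (\ref{eq:psi}) for $\tilde{f}$ yields
\[
\phi'_i(\bar{z})=\phi_i(\bar{z})+\bar{w}_i-\bar{w}_{(\sigma_{\bar{z}})^{-1}(i)}
\]
for the fixed $\bar{z}$ (and in fact for all $\bar{z}$, though only this one matters). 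Writing $i_m=(\sigma_{\bar{z}})^{-m}(i)$ for the elements of the cycle through $i$, with indices read modulo $n_{i\bar{z}}$ so that $i_{n_{i\bar{z}}}=i_0=i$, the requirement $\phi'_{i_m}(\bar{z})=\bar{z}_m$ becomes the linear system
\[
\bar{w}_{i_m}-\bar{w}_{i_{m+1}}=\bar{z}_m-\phi_{i_m}(\bar{z}),\qquad m=0,\ldots,n_{i\bar{z}}-1.
\]

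The third step is to solve this system over $\Z^k$. Summing all $n_{i\bar{z}}$ equations, the left-hand side telescopes to $\bar{0}$, while the right-hand side equals $\sum_m\bar{z}_m-\sum_m\phi_{i_m}(\bar{z})=\phi_i(n_{i\bar{z}}\bar{z})-\phi_i(n_{i\bar{z}}\bar{z})=\bar{0}$, using the hypothesis on the chosen decomposition together with the cycle identity $\phi_i(n_{i\bar{z}}\bar{z})=\sum_{m=0}^{n_{i\bar{z}}-1}\phi_{i_m}(\bar{z})$ established in the proof of Corollary \ref{cor:phis-equal}. Hence the system is consistent: put $\bar{w}_{i_0}=\bar{0}$ and $\bar{w}_{i_m}=-\sum_{\ell=0}^{m-1}\bigl(\bar{z}_\ell-\phi_{i_\ell}(\bar{z})\bigr)$ for $m=1,\ldots,n_{i\bar{z}}-1$, which is well-defined precisely because the total sum vanishes; for indices $j$ outside the cycle of $i$ set $\bar{w}_j=\bar{0}$. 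All these vectors lie in $\Z^k$ since the $\bar{z}_\ell$ and the $\phi_{i_\ell}(\bar{z})$ do. Then $\tilde{f}'=(\tilde{f}_1+\bar{w}_1,\ldots,\tilde{f}_n+\bar{w}_n)$ is a lift of $f$, and by construction $\psi_{\tilde{f}'}=(\phi'_1,\ldots,\phi'_n;\sigma')$ satisfies $\phi'_{(\sigma'_{\bar{z}})^{-m}(i)}(\bar{z})=\bar{z}_m$ for all $m=0,\ldots,n_{i\bar{z}}-1$.

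There is essentially no hard step; the only point demanding care is verifying that the telescoping consistency condition holds automatically, and this is exactly where the hypothesis that the $\bar{z}_m$ sum to $\phi_i(n_{i\bar{z}}\bar{z})$ — and satisfy no stronger constraint — is used.
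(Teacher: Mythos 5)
Your proof is correct and takes essentially the same approach as the paper: the translation vectors you solve for, $\bar{w}_{i_m}=\sum_{\ell=0}^{m-1}\bigl(\phi_{i_\ell}(\bar{z})-\bar{z}_\ell\bigr)$, are exactly the shifts the paper applies to the lift-factors along the cycle of $i$. The telescoping consistency check you isolate, which uses the cycle identity $\phi_i(n_{i\bar{z}}\bar{z})=\sum_{m}\phi_{i_m}(\bar{z})$ together with the hypothesis on the decomposition, is precisely the paper's closing computation for the case $m=n_{i\bar{z}}-1$.
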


\begin{proof}
For the lift $\tilde{f}$, we have \[
\tilde{f}_{(\sigma_{\bar{z}})^{-m}(i)}(\bar{t}+\bar{z})=\tilde{f}_{(\sigma_{\bar{z}})^{-(m+1)}(i)}(\bar{t})+\phi_{(\sigma_{\bar{z}})^{-m}(i)}(\bar{z})
\]
for all $m\in \Z$.
Define the lift $\tilde{f}'=(\tilde{f}'_1,\ldots,\tilde{f}'_n)$ by \[
\tilde{f}'_{(\sigma_{\bar{z}})^{-m}(i)}=\tilde{f}_{(\sigma_{\bar{z}})^{-m}(i)}+\sum_{j=0}^{m-1}(\phi_{(\sigma_{\bar{z}})^{-j}(i)}(\bar{z})-\bar{z}_j)
\]
for $m=0,\ldots,n_{i\bar{z}}-1$.
Then we have \begin{align*}
\tilde{f}'_{(\sigma_{\bar{z}})^{-m}(i)}(\bar{t}+\bar{z})&=\tilde{f}_{(\sigma_{\bar{z}})^{-m}(i)}(\bar{t}+\bar{z})+\sum_{j=0}^{m-1}(\phi_{(\sigma_{\bar{z}})^{-j}(i)}(\bar{z})-\bar{z}_j) \\
&=\tilde{f}_{(\sigma_{\bar{z}})^{-(m+1)}(i)}(\bar{t})+\phi_{(\sigma_{\bar{z}})^{-m}(i)}(\bar{z})+\sum_{j=0}^{m-1}(\phi_{(\sigma_{\bar{z}})^{-j}(i)}(\bar{z})-\bar{z}_j) \\
&=\tilde{f}_{(\sigma_{\bar{z}})^{-(m+1)}(i)}(\bar{t})+\sum_{j=0}^{m}(\phi_{(\sigma_{\bar{z}})^{-j}(i)}(\bar{z})-\bar{z}_j)+\bar{z}_m \\
&=\tilde{f}'_{(\sigma_{\bar{z}})^{-(m+1)}(i)}(\bar{t})+\bar{z}_m
\end{align*}
for all $m=0,\ldots,n_{i\bar{z}}-2$; and for $m=n_{i\bar{z}}-1$ we get \begin{align*}
\tilde{f}'_{(\sigma_{\bar{z}})^{-(n_{i\bar{z}}-1)}(i)}(\bar{t}+\bar{z})&=\tilde{f}_{(\sigma_{\bar{z}})^{-(n_{i\bar{z}}-1)}(i)}(\bar{t}+\bar{z})+\sum_{j=0}^{n_{i\bar{z}}-2}(\phi_{(\sigma_{\bar{z}})^{-j}(i)}(\bar{z})-\bar{z}_j) \\
&=\tilde{f}_{i}(\bar{t})+\phi_{(\sigma_{\bar{z}})^{-(n_{i\bar{z}}-1)}(i)}(\bar{z})+\sum_{j=0}^{n_{i\bar{z}}-2}(\phi_{(\sigma_{\bar{z}})^{-j}(i)}(\bar{z})-\bar{z}_j) \\
&=\tilde{f}_{i}(\bar{t})+\sum_{j=0}^{n_{i\bar{z}}-1}(\phi_{(\sigma_{\bar{z}})^{-j}(i)}(\bar{z})-\bar{z}_j)+\bar{z}_{n_{i\bar{z}}-1} \\
&=\tilde{f}_{i}(\bar{t})+\bar{z}_{n_{i\bar{z}}-1} \\
&=\tilde{f}'_i(\bar{t})+\bar{z}_{n_{i\bar{z}}-1},
\end{align*}
since \[
\sum_{j=0}^{n_{i\bar{z}}-1}(\phi_{(\sigma_{\bar{z}})^{-j}(i)}(\bar{z})-\bar{z}_j)=\sum_{j=0}^{n_{i\bar{z}}-1}\phi_{(\sigma_{\bar{z}})^{-j}(i)}(\bar{z})-\sum_{j=0}^{n_{i\bar{z}}-1}\bar{z}_j=\phi_i(n_{i\bar{z}}\bar{z})-\sum_{j=0}^{n_{i\bar{z}}-1}\bar{z}_j
\]
which is zero by assumption.
\end{proof}

In particular, for a non-affine $n$-valued map $f$, the condition of Corollary \ref{cor:phis-equal} that the images $\phi_i(\bar{z})$ are equal on a cycle of $\sigma_{\bar{z}}$ will not be satisfied for the induced morphisms of \emph{all} lifts of $f$, but there will always be a lift whose induced morphism satisfies this condition: by Theorem \ref{thm:nec-cond}, there are $i$, $\bar{z}\notin S_i$ and $\bar{z}'\in \Z^k$ such that $\phi_i(n_{i\bar{z}}\bar{z})=n_{i\bar{z}}\bar{z}'$, so taking the decomposition $\phi_i(n_{i\bar{z}}\bar{z})=\bar{z}'+\ldots+\bar{z}'$ in the above lemma yields a lift with respect to which all images $\phi_i(\bar{z}),\phi_{(\sigma_{\bar{z}})^{-1}(i)}(\bar{z}),\ldots,\phi_{(\sigma_{\bar{z}})^{-(n_{i\bar{z}}-1)}(i)}(\bar{z})$ are equal to $\bar{z}'$.

A special case is the one where $\bar{z}'=\bar{0}$, as in Example \ref{ex:1}. It turns out that this is the case whenever the image of $\psi$ contains torsion:

\begin{lem}
If the image of a morphism $\psi=(\phi_1,\ldots,\phi_n;\sigma):\Z^k\to\Z^k\rtimes \S_n$ contains torsion, then $\phi_i(n_{i\bar{z}}\bar{z})=\bar{0}$ for some $\bar{z}$ and $i$ such that $\bar{z}\notin S_i$.
\end{lem}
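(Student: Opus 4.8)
The plan is to exploit the fact that torsion in the image of $\psi$ forces a nontrivial element $\bar z\in\Z^k$ whose image $\psi(\bar z)$ has finite order, together with the product formula in $(\Z^k)^n\rtimes\S_n$. First I would take a nonzero $\bar z\in\Z^k$ such that $\psi(\bar z)=(\phi_1(\bar z),\ldots,\phi_n(\bar z);\sigma_{\bar z})$ has finite order; such a $\bar z$ exists because $\Z^k$ is torsion-free, so any nontrivial torsion element of the image is the image of some nonzero (indeed torsion-free) $\bar z$. Say $\psi(\bar z)$ has order $N$. I claim $\bar z\notin S_i$ for at least one $i$: if $\bar z$ lay in every $S_i$, then $\sigma_{\bar z}=\mathrm{id}$, so $\psi(\bar z)=(\phi_1(\bar z),\ldots,\phi_n(\bar z);\mathrm{id})$, and since by Lemma~\ref{lem:aff-wd}-style bookkeeping (or directly from Lemma part (i)) each $\phi_i$ restricted to $S_i$ is a homomorphism, $\psi(\bar z)^N=(N\phi_1(\bar z),\ldots,N\phi_n(\bar z);\mathrm{id})$; this is trivial only if every $\phi_i(\bar z)=\bar 0$, i.e.\ $\psi(\bar z)$ itself is trivial, contradicting that it is a nontrivial torsion element. (If $\psi(\bar z)$ happened to be trivial we would just replace it; the hypothesis guarantees a genuinely nontrivial torsion element in the image, pulled back to a nonzero $\bar z$ with $\psi(\bar z)\neq 1$.)

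Next, having fixed such $i$ with $\bar z\notin S_i$, I would compute the $i$-th coordinate of $\psi(\bar z)^{n_{i\bar z}}$ in two ways, exactly as in the proof of Corollary~\ref{cor:phis-equal}: iterating the product formula gives that the $i$-th coordinate of $\psi(n_{i\bar z}\bar z)=\psi(\bar z)^{n_{i\bar z}}$ equals $\phi_i(\bar z)+\phi_{(\sigma_{\bar z})^{-1}(i)}(\bar z)+\cdots+\phi_{(\sigma_{\bar z})^{-(n_{i\bar z}-1)}(i)}(\bar z)=\phi_i(n_{i\bar z}\bar z)$. Now since $\psi(\bar z)$ has order $N$, so does—or rather, a divisor of $N$—the element $\psi(n_{i\bar z}\bar z)$; in any case $\psi(\bar z)$ has finite order, hence so does $\psi(m\bar z)$ for every integer $m$, and in particular $\psi(n_{i\bar z}\bar z)$ has finite order. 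But $n_{i\bar z}\bar z\in S_i$, so $\psi$ restricted to the cyclic subgroup generated by $n_{i\bar z}\bar z$ lands in the part where $\phi_i$ is a homomorphism: $\psi((n_{i\bar z}\bar z)^{\ell})$ has $i$-th coordinate $\ell\,\phi_i(n_{i\bar z}\bar z)$, and this can be finite-order (indeed it must be, equal to $\bar 0$ for $\ell=N'$, the order of $\psi(n_{i\bar z}\bar z)$) only if $\phi_i(n_{i\bar z}\bar z)=\bar 0$. Hence $\phi_i(n_{i\bar z}\bar z)=\bar 0$ with $\bar z\notin S_i$, as desired.

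The cleanest packaging is: torsion in the image gives a nonzero $\bar z$ with $\psi(\bar z)\neq 1$ of finite order; then $n_{i\bar z}\bar z\in S_i$ and $\psi$ has finite order on $\langle n_{i\bar z}\bar z\rangle$, where it is an honest homomorphism into $(\Z^k)^n\rtimes\{\mathrm{id}\}\cong(\Z^k)^n$, a torsion-free group, forcing $\psi(n_{i\bar z}\bar z)=1$ and in particular $\phi_i(n_{i\bar z}\bar z)=\bar 0$; the only thing left is to rule out $\bar z\in S_i$ for all $i$, which would make $\psi(\bar z)$ itself lie in the torsion-free $(\Z^k)^n$ and hence be trivial, a contradiction. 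I expect the main (minor) obstacle to be the careful argument that one may choose $\bar z$ with $\psi(\bar z)$ nontrivial and of finite order and that this $\bar z$ escapes some $S_i$; once that is pinned down, everything else is the torsion-freeness of $(\Z^k)^n$ combined with the already-established cycle identity for $\phi_i(n_{i\bar z}\bar z)$.
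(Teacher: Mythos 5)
Your proof is correct and takes essentially the same route as the paper: pick $\bar{z}$ with $\psi(\bar{z})\neq 1$ of finite order, use that $n_{i\bar{z}}\bar{z}\in S_i$ and that $\phi_i|_{S_i}$ is a homomorphism into the torsion-free group $\Z^k$ to force $\phi_i(n_{i\bar{z}}\bar{z})=\bar{0}$, and rule out $\bar{z}\in S_i$ for all $i$ since that would make $\psi(\bar{z})$ trivial. One small quibble with your final ``packaging'': $\sigma_{\bar{z}}^{n_{i\bar{z}}}$ need not be the identity (other cycles of $\sigma_{\bar{z}}$ may have lengths not dividing $n_{i\bar{z}}$), so $\psi(\langle n_{i\bar{z}}\bar{z}\rangle)$ need not lie in $(\Z^k)^n\rtimes\{\mathrm{id}\}$ --- but the coordinate-wise argument in your second paragraph, which only tracks the $i$-th coordinate, already gives exactly what is needed.
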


\begin{proof}
Suppose there are $\bar{z}\in \Z^k$ and $m\in \Z_{>0}$ such that $\psi(\bar{z})\neq 1$ but $\psi(m\bar{z})=\psi(\bar{z})^m=1$. Then $m$ is a multiple of $n_{i\bar{z}}$ for all $i$, since $\psi(\bar{z})^m=1$ implies $\sigma_{\bar{z}}^m=1$. Write $m=en_{i\bar{z}}$ with $e\in \Z$. Since $n_{i\bar{z}}\bar{z}\in S_i$, we have $e\phi_i(n_{i\bar{z}}\bar{z})=\phi_i(m\bar{z})=\bar{0}$, from which it follows that $\phi_i(n_{i\bar{z}}\bar{z})=\bar{0}$. 

Note that $\bar{z}\notin S_i$ for at least one value of $i$, for otherwise $n_{i\bar{z}}=1$ for all $i$ and hence $\sigma_{\bar{z}}=1$ and $\phi_i(\bar{z})=\bar{0}$ for all $i$, i.e. $\psi(\bar{z})=1$.
\end{proof}

Thus, if $f$ is an $n$-valued map with lift $\tilde{f}$ such that the image of $\psi_{\tilde{f}}$ contains torsion, there is always another lift for which all images $\phi_i(\bar{z})$ are zero on a non-trivial cycle of $\sigma_{\bar{z}}$, for some $\bar{z}$. In particular, by Corollary \ref{cor:phis-equal},

\begin{cor}
If $f:T^k\to D_n(T^k)$ is affine, the image of $\psi_{\tilde{f}}$ is torsion free, for any lift $\tilde{f}$.
\end{cor}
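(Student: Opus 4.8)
The plan is to derive this directly from the previous lemma together with Corollary~\ref{cor:phis-equal}, which is the whole point of the way the preceding results have been arranged. Suppose, for contradiction, that $f:T^k\to D_n(T^k)$ is affine and that the image of $\psi_{\tilde f}$ contains a non-trivial torsion element for some lift $\tilde f$; write $\psi=\psi_{\tilde f}=(\phi_1,\ldots,\phi_n;\sigma)$. By the previous lemma there exist $\bar z\in\Z^k$ and $i\in\{1,\ldots,n\}$ with $\bar z\notin S_i$ and $\phi_i(n_{i\bar z}\bar z)=\bar 0$.

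The next step is to observe that $\bar 0\in n_{i\bar z}\Z^k$, so Theorem~\ref{thm:nec-cond} applies verbatim: since $\phi_i(n_{i\bar z}\bar z)\in n_{i\bar z}\Z^k$ for an index $i$ and a vector $\bar z\notin S_i$, the morphism $\psi$ cannot be induced by an affine $n$-valued map. This contradicts the assumption that $\psi=\psi_{\tilde f}$ for the affine map $f$, and the conclusion follows. (Alternatively, one could invoke Corollary~\ref{cor:phis-equal} instead: by Lemma~\ref{lem:ai-diff-phi} and the argument just before this corollary, $\phi_i(n_{i\bar z}\bar z)=\bar 0$ forces — after the identity $\phi_i(n_{i\bar z}\bar z)=\sum_{\ell=0}^{n_{i\bar z}-1}\phi_{(\sigma_{\bar z})^{-\ell}(i)}(\bar z)$ — the images on the cycle of $\sigma_{\bar z}$ through $i$ to be ``balanced'', and combined with Lemma~\ref{lem:lift-choice} one produces a lift of $f$ violating Corollary~\ref{cor:phis-equal}; but going through Theorem~\ref{thm:nec-cond} is cleaner since that theorem is lift-independent on $S_i$.)

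There is essentially no obstacle here: the statement is a one-line corollary, and the only thing to be careful about is the logical direction. The substantive work — showing that torsion in the image forces $\phi_i(n_{i\bar z}\bar z)=\bar 0$ off the stabiliser, and showing that such a vanishing obstructs affineness — has already been done in the preceding lemma and in Theorem~\ref{thm:nec-cond} respectively. One should also note that the statement is about \emph{every} lift $\tilde f$, but this is automatic: if the image of $\psi_{\tilde f}$ contained torsion for even one lift, the above argument would already contradict affineness, so affineness of $f$ forces torsion-freeness of $\psi_{\tilde f}$ for all lifts simultaneously.
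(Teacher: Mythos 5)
Your argument is correct, but it takes a shorter route than the paper. The paper deduces the corollary by combining the preceding lemma with Lemma~\ref{lem:lift-choice}: from $\phi_i(n_{i\bar z}\bar z)=\bar 0$ with $\bar z\notin S_i$ it passes to a \emph{new} lift on which all the images $\phi_{(\sigma_{\bar z})^{-m}(i)}(\bar z)$ along the cycle through $i$ are zero, and then invokes the cycle condition of Corollary~\ref{cor:phis-equal} -- this fits the narrative of that section, which is about realising every divisibility obstruction as a cycle obstruction after a change of lift. You instead feed $\phi_i(n_{i\bar z}\bar z)=\bar 0\in n_{i\bar z}\Z^k$ directly into Theorem~\ref{thm:nec-cond}, which is exactly your parenthetical observation that the lift-change is unnecessary; this is legitimate and arguably cleaner, since the divisibility condition depends only on $S_i$ and $\phi_i|_{S_i}$, which are lift-independent. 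Both routes share the same implicit step, which you would do well to make explicit: applying Theorem~\ref{thm:nec-cond} (or Corollary~\ref{cor:phis-equal}) to $\psi_{\tilde f}$ for an \emph{arbitrary} lift $\tilde f$ requires knowing that $\psi_{\tilde f}$ is induced by an affine map via an affine lift, which holds because any two lifts of $f$ differ by a covering transformation $(\bar z_1,\ldots,\bar z_n;\tau)$, and composing an affine lift with such a transformation again yields affine lift-factors.
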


It can also happen that $\bar{z}'\neq \bar{0}$, so the absence of torsion in the image of $\psi_{\tilde{f}}$ is not sufficient for being induced by an affine map:

\begin{ex}\label{ex:2}
We can slightly adapt our first example by setting \[
\textstyle
\tilde{f}_i(t_1,t_2)=\left(t_1+\frac{1}{4}\cos\left(2\pi\left(\frac{t_1}{n}+\frac{i-1}{n} \right)\right),\frac{1}{4}\sin\left(2\pi\left(\frac{t_1}{n}+\frac{i-1}{n} \right)\right)\right).
\]
For all $t_1,t_2\in \R$, and for all $i$, we have 
\begin{equation}\label{eq:ex3wd}
\begin{aligned}
\tilde{f}_i(t_1+1,t_2)&=\tilde{f}_{i+1\,\text{mod}\,n}(t_1,t_2)+(1,0) \\
\tilde{f}_i(t_1,t_2+1)&=\tilde{f}_i(t_1,t_2),
\end{aligned}
\end{equation}
so $f$ is well-defined. To show that it is $n$-valued, we can use exactly the same argument as in the first example.

It follows from (\ref{eq:ex3wd}) that $\psi_{\tilde{f}}$ is given by \[
\psi_{\tilde{f}}(z_1,z_2)=((z_1,0),\ldots,(z_1,0);(n\cdots 2\hspace{1pt}1)^{z_1}).
\]
In particular, we have $\psi_{\tilde{f}}(1,0)=((1,0),\ldots,(1,0);(n\cdots2\hspace{1pt}1))$, which would be impossible if $\psi_{\tilde{f}}$ were induced by an affine map, by Corollary \ref{cor:phis-equal}. However, the image of $\psi_{\tilde{f}}$ contains no torsion, since for all $z_1,z_2$ and $m$, \[
\psi_{\tilde{f}}(z_1,z_2)^m=((mz_1,0),\ldots,(mz_1,0);(n\cdots 2\hspace{1pt}1)^{mz_1}),
\] which is trivial if and only if $z_1=0$, and in that case $\psi_{\tilde{f}}(z_1,z_2)$ itself is trivial.
\end{ex}

\section{Constructing more complicated non-affine maps}

In the previous example, we constructed an $n$-valued map with non-trivial maps $\phi_i$ starting from one with trivial maps $\phi_i$ with the same morphism $\sigma$. This can be done more generally:

\begin{lem}\label{lem:gen-psi-from-torsion}
Suppose $f:T^k\multimap T^k$ is an irreducible $n$-valued map with lift $\tilde{f}=(\tilde{f}_1,\ldots,\tilde{f}_n)$ such that $\psi_{\tilde{f}}=(\bar{0},\ldots,\bar{0};\sigma)$, and let $\psi=(\phi_1,\ldots,\phi_n;\sigma):\Z^k\to (\Z^k)^n\rtimes \S_n$ be a morphism with the same map $\sigma$ and arbitrary maps $\phi_i$. Then there are $A\in \R^{k\times k}$, $\bar{a}_1,\ldots,\bar{a}_n\in \R^k$ and $\varepsilon>0$ such that \[
p(\bar{t})\mapsto (p(A\bar{t}+\bar{a}_1+\varepsilon\tilde{f}_1(\bar{t})),\ldots,p(A\bar{t}+\bar{a}_n+\varepsilon\tilde{f}_n(\bar{t})))
\]
defines an $n$-valued map inducing the morphism $\psi$ with respect to the lift \[
\bar{t}\mapsto (A\bar{t}+\bar{a}_1+\varepsilon\tilde{f}_1(\bar{t}),\ldots,A\bar{t}+\bar{a}_n+\varepsilon\tilde{f}_n(\bar{t})).
\]
\end{lem}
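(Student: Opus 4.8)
The plan is to keep the affine data $A$ and $\bar{a}_1,\ldots,\bar{a}_n$ from the proof of Theorem~\ref{thm:suff-cond}, and to let the perturbation terms $\varepsilon\tilde{f}_i$ handle $n$-valuedness, which is the only property that construction does not supply here since we no longer assume the divisibility condition. Concretely: since $f$ is irreducible, $\psi_{\tilde{f}}$ induces a single $\sigma$-class, hence so does $\psi$ (which has the same $\sigma$); write $S$ for the common stabiliser and $\phi\colon S\to\Z^{k}$ for the common restriction of the $\phi_i$ to $S$ (Lemma~\ref{lem:n_iz-phi_i}), let $A\in\Q^{k\times k}$ be the matrix representing $\phi$, and define the points $\bar{a}_i$ by $\bar{a}_{(\sigma_{\bar{z}})^{-1}(1)}=A\bar{z}-\phi_1(\bar{z})$. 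The verification that this is well defined and satisfies $A\bar{z}+\bar{a}_i=\bar{a}_{(\sigma_{\bar{z}})^{-1}(i)}+\phi_i(\bar{z})$ for all $i$ and $\bar{z}$ is verbatim the computation in the proof of Theorem~\ref{thm:suff-cond}, and it never uses the divisibility condition $\phi(n_{\bar{z}}\bar{z})\notin n_{\bar{z}}\Z^{k}$.

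Now set $g_i(\bar{t})=A\bar{t}+\bar{a}_i+\varepsilon\tilde{f}_i(\bar{t})$ for a parameter $\varepsilon>0$ to be fixed below. From $\psi_{\tilde{f}}=(\bar{0},\ldots,\bar{0};\sigma)$ we have $\tilde{f}_i(\bar{t}+\bar{z})=\tilde{f}_{(\sigma_{\bar{z}})^{-1}(i)}(\bar{t})$, so combining this with the relation on the $\bar{a}_i$ and with $\phi_i(\bar{z})\in\Z^{k}$ gives $g_i(\bar{t}+\bar{z})=g_{(\sigma_{\bar{z}})^{-1}(i)}(\bar{t})+\phi_i(\bar{z})$. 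This identity does two things at once: it shows that $\{p(g_1(\bar{t})),\ldots,p(g_n(\bar{t}))\}$ depends only on $p(\bar{t})$, so that $p(\bar{t})\mapsto\{p(g_1(\bar{t})),\ldots,p(g_n(\bar{t}))\}$ is a well-defined continuous map $T^{k}\to D_n(T^{k})$ (the continuous map $\bar{t}\mapsto(g_1(\bar{t}),\ldots,g_n(\bar{t}))$ into $F_n(\R^{k},\Z^{k})$, followed by $p^{n}$, descends through $p$); and it is precisely equation~(\ref{eq:psi}) for $\psi$, so that as soon as the map is $n$-valued, the lift $\bar{t}\mapsto(g_1(\bar{t}),\ldots,g_n(\bar{t}))$ induces $\psi$.

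It remains to pick $\varepsilon$ so that $p(g_i(\bar{t}))\neq p(g_j(\bar{t}))$ for all $i\neq j$ and all $\bar{t}$, that is, $(\bar{a}_i-\bar{a}_j)+\varepsilon(\tilde{f}_i(\bar{t})-\tilde{f}_j(\bar{t}))\notin\Z^{k}$. The key point is that each $\tilde{f}_i$ is bounded on $\R^{k}$: by $\tilde{f}_i(\bar{t}+\bar{z})=\tilde{f}_{(\sigma_{\bar{z}})^{-1}(i)}(\bar{t})$, every value of $\tilde{f}_i$ equals some $\tilde{f}_j$ evaluated on the compact cube $[0,1]^{k}$, on which continuity forces boundedness. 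Set $M=\sup\{\lVert\tilde{f}_i(\bar{t})-\tilde{f}_j(\bar{t})\rVert\mid\bar{t}\in\R^{k},\ i\neq j\}$, which is finite, and positive provided $n\geq 2$ (the case $n=1$ being immediate); $n$-valuedness of $f$ moreover gives $\tilde{f}_i(\bar{t})-\tilde{f}_j(\bar{t})\notin\Z^{k}$ for $i\neq j$. Set $d=\min_{i\neq j}\delta_{ij}$, where $\delta_{ij}=\operatorname{dist}(\bar{a}_i-\bar{a}_j,\Z^{k})$ if $\bar{a}_i-\bar{a}_j\notin\Z^{k}$ and $\delta_{ij}=1$ otherwise; then $d>0$, and choose any $\varepsilon>0$ with $\varepsilon M<d$. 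Then for $i\neq j$: if $\bar{a}_i-\bar{a}_j\notin\Z^{k}$, the reverse triangle inequality for the distance to $\Z^{k}$ gives $\operatorname{dist}\big((\bar{a}_i-\bar{a}_j)+\varepsilon(\tilde{f}_i(\bar{t})-\tilde{f}_j(\bar{t})),\,\Z^{k}\big)\geq\delta_{ij}-\varepsilon M>0$; and if $\bar{a}_i-\bar{a}_j\in\Z^{k}$, then $\lVert\varepsilon(\tilde{f}_i(\bar{t})-\tilde{f}_j(\bar{t}))\rVert\leq\varepsilon M<1$ forces this vector to equal $\bar{0}$, hence $\tilde{f}_i(\bar{t})=\tilde{f}_j(\bar{t})$, which is excluded. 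In either case $g_i(\bar{t})-g_j(\bar{t})=(\bar{a}_i-\bar{a}_j)+\varepsilon(\tilde{f}_i(\bar{t})-\tilde{f}_j(\bar{t}))\notin\Z^{k}$, so the map is $n$-valued, and we are done. I do not expect a serious obstacle: beyond correctly invoking the construction of Theorem~\ref{thm:suff-cond}, the only genuinely new ingredients are the boundedness of the $\tilde{f}_i$ and the mild case split in the choice of $\varepsilon$ — note that when $\bar{a}_i-\bar{a}_j\in\Z^{k}$ it is the $n$-valuedness of the original $f$, not the smallness of $\varepsilon$, that does the work.
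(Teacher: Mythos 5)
Your proposal is correct and follows essentially the same route as the paper: reuse the affine data $A,\bar a_1,\dots,\bar a_n$ from the proof of Theorem~\ref{thm:suff-cond} (whose well-definedness indeed never uses the divisibility condition), observe that the $\tilde f_i$ are bounded, and add an $\varepsilon$-scaled copy of them to restore $n$-valuedness. The only divergence is in how the smallness of $\varepsilon$ is exploited: the paper notes that $(\bar a_i)_j\in\frac{1}{n_j}\Z$ and takes $\varepsilon=1/(2M\max_j n_j)$ so that each component of the perturbation is trapped in $(-\frac{1}{n_j},\frac{1}{n_j})$, whereas you use a generic distance-to-the-lattice estimate with a case split on whether $\bar a_i-\bar a_j\in\Z^k$; both are valid.
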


\begin{proof}
Let $\bar{e}_1,\ldots,\bar{e}_k$ be the standard basis of $\Z^k$, and denote $\sigma_{\bar{e}_j}=\vcentcolon\sigma_j$ and $n_{\bar{e}_j}=\vcentcolon n_j$ for all $j\in \{1,\ldots,k\}$. Then $\psi_{\tilde{f}}(n_j\bar{e}_j)=1$, so $\psi_{\tilde{f}}(mn_j\bar{e}_j)=1$ for any $m\in \Z$, which means that $\tilde{f}_i(\bar{t}+mn_j\bar{e}_j)=\tilde{f}_i(\bar{t})$ for all $m$, $i$ and $\bar{t}$. Since the maps $\tilde{f}_i$ are continuous and the set $\{mn_j\bar{e}_j\mid m\in \Z,\,j=1,\ldots,k \}$ is a lattice in $\R^k$, it follows that the maps $\tilde{f}_i$ are bounded; say $M\in \R_{>0}$ is such that $|(\tilde{f}_i(\bar{t}))_j|<M$ for all $i,j$ and $\bar{t}$, where $(\tilde{f}_i(\bar{t}))_j$ denotes the $j$-th component of $\tilde{f}_i(\bar{t})$. Define $\varepsilon\vcentcolon=1/(2M\max_j n_j)$.

Let the matrix $A$ and the points $\bar{a}_1,\ldots,\bar{a}_n$ be as in the proof of Theorem \ref{thm:suff-cond}, where it is shown that they are well-defined and satisfty \[
A\bar{z}+\bar{a}_i=\bar{a}_{(\sigma_{\bar{z}})^{-1}(i)}+\phi_i(\bar{z})
\]
for all $i$ and $\bar{z}$. Since $A\,n_j\bar{e}_j\in \Z^k$ for all $j$, the $j$-th column of $A$ consists of numbers in $\frac{1}{n_j}\Z^k$. Therefore, we have $(A\bar{z})_j\in \frac{1}{n_j}\Z^k$ for all $j$ and for all $\bar{z}$.
By construction of the points $\bar{a}_i$, it follows that $(\bar{a}_i)_j\in \frac{1}{n_j}\Z^k$ for all $i$ and $j$.

With these $A$, $\bar{a}_i$ and $\varepsilon$, the map in the statement is well-defined and the given lift induces $\psi$, since \[
A(\bar{t}+\bar{z})+\bar{a}_i+\varepsilon\tilde{f}_i(\bar{t}+\bar{z})=A\bar{t}+\bar{a}_{(\sigma_{\bar{z}})^{-1}(i)}+\phi_i(\bar{z})+\varepsilon\tilde{f}_{(\sigma_{\bar{z}})^{-1}(i)}(\bar{t})
\]
for all $i$ and $\bar{z}$. To see that the map is $n$-valued, suppose \[
A\bar{t}+\bar{a}_i+\varepsilon\tilde{f}_i(\bar{t})\equiv A\bar{t}+\bar{a}_{i'}+\varepsilon\tilde{f}_{i'}(\bar{t}) \text{ mod }\Z^k
\]
for some indices $i,i'\in \{1,\ldots,n\}$ and for some $\bar{t}$. Then \[
\varepsilon(\tilde{f}_{i'}(\bar{t})-\tilde{f}_i(\bar{t}))\equiv \bar{a}_i-\bar{a}_{i'} \text{ mod }\Z^k.
\]
By definition of $\varepsilon$, the $j$-th component of the left hand side lies in $(-\frac{1}{n_j},\frac{1}{n_j})$ whereas the $j$-th component of the right hand side lies in $\frac{1}{n_j}\Z^k$, which can only happen if both sides are zero. Thus, \[
\varepsilon\tilde{f}_{i'}(\bar{t})=\varepsilon\tilde{f}_i(\bar{t}).
\]
But this implies that $\tilde{f}_{i'}(\bar{t})=\tilde{f}_i(\bar{t})$, and hence $i=i'$ because the given map $f$ was $n$-valued.
\end{proof}

Thus, if we want to find examples of $n$-valued maps inducing given morphisms $\psi$, it suffices to restrict our attention to the morphisms $\psi$ for which all maps $\phi_i$ are trivial. Below, we show two ways in which the idea of Example \ref{ex:1} can be extended to more complicated situations of this type.

\begin{ex}
Consider the morphism $\psi:\Z^2\to (\Z^2)^4\rtimes \S_4$ defined by \begin{align*} 
\psi(\bar{e}_1)&=(\bar{0},\bar{0},\bar{0},\bar{0};(12)(34)) \\
\psi(\bar{e}_2)&=(\bar{0},\bar{0},\bar{0},\bar{0};(13)(24)).
\end{align*}
A $4$-valued map inducing this morphism is given by the lift-factors \begin{align*}
\tilde{f}_1(t_1,t_2)&=\textstyle(\frac{1}{4}\cos(2\pi\frac{t_1}{2})+\frac{1}{8}\cos(2\pi\frac{t_2}{2}),\frac{1}{4}\sin(2\pi\frac{t_1}{2})+\frac{1}{8}\sin(2\pi\frac{t_2}{2})) \\
\tilde{f}_2(t_1,t_2)&=\textstyle(\frac{1}{4}\cos(2\pi\frac{t_1+1}{2})+\frac{1}{8}\cos(2\pi\frac{t_2}{2}),\frac{1}{4}\sin(2\pi\frac{t_1+1}{2})+\frac{1}{8}\sin(2\pi\frac{t_2}{2})) \\
\tilde{f}_3(t_1,t_2)&=\textstyle(\frac{1}{4}\cos(2\pi\frac{t_1}{2})+\frac{1}{8}\cos(2\pi\frac{t_2+1}{2}),\frac{1}{4}\sin(2\pi\frac{t_1}{2})+\frac{1}{8}\sin(2\pi\frac{t_2+1}{2})) \\
\tilde{f}_4(t_1,t_2)&=\textstyle(\frac{1}{4}\cos(2\pi\frac{t_1+1}{2})+\frac{1}{8}\cos(2\pi\frac{t_2+1}{2}),\frac{1}{4}\sin(2\pi\frac{t_1+1}{2})+\frac{1}{8}\sin(2\pi\frac{t_2+1}{2})).
\end{align*}
Since the values of all components of these lift-factors lie between $-\frac{3}{8}$ and $\frac{3}{8}$, two of them are the same mod $\Z^2$ if and only if they are the same. The fact that $\tilde{f}_1(\bar{t})\neq \tilde{f}_2(\bar{t})$, $\tilde{f}_1(\bar{t})\neq \tilde{f}_3(\bar{t})$, $\tilde{f}_2(\bar{t})\neq \tilde{f}_4(\bar{t})$ and $\tilde{f}_3(\bar{t})\neq \tilde{f}_4(\bar{t})$ for all $\bar{t}$ follows by the argument of Example \ref{ex:1}, because each time one of the terms on both sides cancels out. For $\tilde{f}_1(\bar{t})\neq \tilde{f}_4(\bar{t})$ and $\tilde{f}_2(\bar{t})\neq \tilde{f}_3(\bar{t})$, it suffices to observe that it never happens at the same time that \[
\left\{
\begin{array}{l}
\cos(2\pi \frac{t_1}{2})-\cos(2\pi \frac{t_1+1}{2})\in [-1,1] \\
\sin(2\pi \frac{t_1}{2})-\sin(2\pi \frac{t_1+1}{2})\in [-1,1]
\end{array}
\right.
\] 
so that it cannot simultaneously happen that \[
\left\{
\begin{array}{l}
\cos(2\pi \frac{t_1}{2})-\cos(2\pi \frac{t_1+1}{2})=\pm(\frac{1}{2}\cos(2\pi \frac{t_2}{2})-\frac{1}{2}\cos(2\pi \frac{t_2+1}{2})) \\
\sin(2\pi \frac{t_1}{2})-\sin(2\pi \frac{t_1+1}{2})=\pm(\frac{1}{2}\sin(2\pi \frac{t_2}{2})-\frac{1}{2}\sin(2\pi \frac{t_2+1}{2})).
\end{array}
\right.
\]
\end{ex}

\smallskip

\begin{ex}
The morphism $\psi:\Z^2\to (\Z^2)^4\rtimes \S_4$ defined by \begin{align*} 
\psi(\bar{e}_1)&=(\bar{0},\bar{0},\bar{0},\bar{0};(1234)) \\
\psi(\bar{e}_2)&=(\bar{0},\bar{0},\bar{0},\bar{0};(13)(24))
\end{align*}
is induced by the $4$-valued map with lift-factors \begin{align*}
\tilde{f}_1(t_1,t_2)&=\textstyle(\frac{1}{4}\cos(2\pi\frac{t_1+2t_2}{4}),\frac{1}{4}\sin(2\pi\frac{t_1+2t_2}{4})) \\
\tilde{f}_2(t_1,t_2)&=\textstyle(\frac{1}{4}\cos(2\pi\frac{t_1+2t_2+1}{4}),\frac{1}{4}\sin(2\pi\frac{t_1+2t_2+1}{4})) \\
\tilde{f}_1(t_1,t_2)&=\textstyle(\frac{1}{4}\cos(2\pi\frac{t_1+2t_2+2}{4}),\frac{1}{4}\sin(2\pi\frac{t_1+2t_2+2}{4})) \\
\tilde{f}_2(t_1,t_2)&=\textstyle(\frac{1}{4}\cos(2\pi\frac{t_1+2t_2+3}{4}),\frac{1}{4}\sin(2\pi\frac{t_1+2t_2+3}{4})).
\end{align*}
Here, $4$-valuedness just follows by the argument of Example \ref{ex:1}.
\end{ex}


\begin{thebibliography}{1}

\bibitem{brown2006}
Robert~F. Brown.
\newblock {Fixed Points of \emph{n}-Valued Multimaps of the Circle}.
\newblock {\em Bulletin of the Polish Academy of Sciences, Mathematics},
  54(2):153--162, 2006.

\bibitem{charlotte}
Robert~F. Brown, Charlotte Deconinck, Karel Dekimpe, and P.~Christopher
  Staecker.
\newblock {Lifting Classes for the Fixed Point Theory of \emph{n}-Valued Maps}.
\newblock {\em Topology and its Applications}, 274:107--125, 2020.

\bibitem{browngoncalves}
Robert~F. Brown and Daciberg~Lima Gon\c{c}alves.
\newblock {On the topology of \emph{n}-valued maps}.
\newblock {\em Advanced Fixed Point Theory}, 8(2):205--220, 2018.

\bibitem{felshtyn-lee}
Alexander Fel'shtyn and Jong~Bum Lee.
\newblock The {N}ielsen and {R}eidemeister numbers of maps on
  infra-solvmanifolds of type ({R}).
\newblock {\em Topology Appl.}, 181:62--103, 2015.

\bibitem{leelee}
Jong~Bum Lee and Kyung~Bai Lee.
\newblock Lefschetz numbers for continuous maps, and periods for expanding maps
  on infra-nilmanifolds.
\newblock {\em J. Geom. Phys.}, 56(10):2011--2023, 2006.

\bibitem{kblee}
Kyung~Bai Lee.
\newblock Maps on infra-nilmanifolds.
\newblock {\em Pacific J. Math.}, 168(1):157--166, 1995.

\bibitem{staecker}
P.~Christopher Staecker.
\newblock {Partitions of \emph{n}-valued maps}.
\newblock {\em arXiv:2101.09326 (math.GN)}, 2021.

\end{thebibliography}

\end{document}